\documentclass[11pt]{amsart}

\usepackage[T1]{fontenc}
\usepackage[utf8]{inputenc}
\usepackage{lmodern}
\usepackage{amsmath,amssymb,amsthm,mathtools}
\usepackage{enumitem}
\usepackage{array}
\usepackage{microtype}
\usepackage[hidelinks]{hyperref}

\newtheorem{theorem}{Theorem}[section]
\newtheorem{lemma}[theorem]{Lemma}
\newtheorem{proposition}[theorem]{Proposition}
\newtheorem{corollary}[theorem]{Corollary}
\theoremstyle{definition}
\newtheorem{definition}[theorem]{Definition}
\theoremstyle{remark}

\newcommand{\Pow}{\mathcal P}
\newcommand{\Cpx}{\mathfrak C}
\newcommand{\Dwn}{\operatorname{Down}}

\newcommand{\Jp}{\operatorname{Jp}}

\newcommand{\eps}{\varepsilon}

\newcommand{\T}{\mathbf T}
\newcommand{\Con}{\operatorname{Con}}
\newcommand{\Aut}{\operatorname{Aut}}

\title[Typed congruences in ternary Gamma-semirings]
{Typed Congruences and Quotient-Critical Irreducibility in Complete Ternary
$\Gamma$-Semirings}

\author{Chandrasekhar Gokavarapu}
\address{Department of Mathematics, Government College (Autonomous),
Rajahmundry, Andhra Pradesh 533105, India; and Department of Mathematics,
Acharya Nagarjuna University, Guntur, Andhra Pradesh 522510, India}
\email{chandrasekhargokavarapu@gmail.com}
\email{chandrasekhargokavarapu@gcrjy.ac.in}
\thanks{ORCID: 0009-0006-5306-371X}

\author{Madhusudhana Rao Dasari}
\address{Department of Mathematics, Acharya Nagarjuna University, Guntur,
Andhra Pradesh 522510, India; and Department of Mathematics, Government
College for Women (Autonomous), Pattabhipuram, Guntur, Andhra Pradesh 522006,
India}
\email{dmrmaths@gmail.com}
\thanks{ORCID: 0000-0003-0897-868X}

\date{August 24, 2026}

\hypersetup{
  pdftitle={Typed Congruences and Quotient-Critical Irreducibility in Complete Ternary Gamma-Semirings},
  pdfauthor={Chandrasekhar Gokavarapu and Madhusudhana Rao Dasari},
  pdfkeywords={ternary Gamma-semiring, polyadic semigroup, typed congruence, quotient, binary reducibility}
}

\begin{document}
\sloppy
\begin{abstract}
We study congruences and binary reducibility in completely
additive ternary $\Gamma$-semirings through their two-sorted atomic cores.
For an odd $m\geq3$, an abelian group $G$ of exponent dividing $m-1$, and
$u,v\in G$, we construct a two-branch symmetric $m$-ary band
$F_m(G;u,v)$.  We prove
\[
\Con(F_m(G;u,v))\cong\operatorname{Sub}(G)\times B_2
\]
and classify every carrier--index congruence pair: $(\theta,\phi)$ is typed
exactly when $\theta$ is an ordinary congruence and $\phi$ refines $\theta$.
We also give a quotient-by-quotient reducibility criterion.  In the finite
case, $F_m(G;u,v)$ is quotient-critically irreducible exactly when $G$ is a
cyclic $p$-group and $u-v$ has order $p$.  The specialization
$G=\mathbb Z_2$ yields a four-point family $H_m$ extending the irreducible
ternary example of Devillet and Mathonet.  Its congruence lattice is $B_3$,
its alternating core has exactly $36$ typed congruence pairs, and its
automorphism group is $C_2$.  Its seven proper quotients are reducible and
form five isomorphism types; in arity five their exact reduction counts are
determined.  The powerset lift of $H_5$ is a $16$-element atom-total complete
atomic Boolean ternary $\Gamma$-semiring with no atomic binary collapse,
whereas each proper diagonal strong atom-saturated quotient has one.  Its
parity congruence is recovered on atoms by one explicitly specified
depth-one polynomial inequation.
\end{abstract}

\keywords{ternary $\Gamma$-semiring, polyadic semigroup, typed congruence,
quotient, binary reducibility, complete atomic Boolean algebra}
\subjclass[2020]{Primary 16Y60; Secondary 06B23, 08A02, 08A30, 08A40,
08A68, 20N15}

\maketitle

\section{Introduction}

A recurring problem for higher-arity algebra is whether an associative
$m$-ary operation is induced by an associative binary operation on the same
set.  Quotients make this problem subtler: irreducibility of an operation need
not persist after states are identified.  In an indexed ternary product
\[
S\times\Gamma\times S\times\Gamma\times S\longrightarrow S
\]
there is a second issue.  Carrier and index states have different types, so a
quotient is governed by a pair of equivalence relations rather than by one
ordinary congruence.  The interaction between these two relations is not
visible in a one-sorted treatment.

Completely additive products on prime-generated complete lattices admit a
downset relational representation.  In those coordinates, the questions
studied here become finite and explicit: which typed congruences occur, which
quotient cores arise, and at exactly which quotients binary reducibility
appears.

\subsection{Context and contribution}

Polyadic algebra begins with D{\"o}rnte and Post~\cite{Dornte1929,Post1940}.
Reducibility is known for quasitrivial $n$-ary semigroups
\cite{CouceiroDevillet2019}, for idempotent nondecreasing operations on chains
\cite{KissSomlai2019}, and for broader idempotent classes
\cite{CouceiroDevilletMarichalMathonet2022}.  Symmetric
$n$-ary bands admit a strong-semilattice decomposition and an exact
reducibility criterion~\cite{DevilletMathonet2021}.  Devillet and Mathonet's
four-point ternary example is irreducible, but its congruence lattice, its
two-sorted congruence pairs, and the reducibility of all its quotients are not
part of that criterion.  Finite classification of semigroup-like structures
is also an active theme in this journal; a recent example is the classification
and computer-assisted enumeration of small doppelsemigroups
\cite{Gavrylkiv2026}.

Ternary rings, $\Gamma$-rings and ternary semirings were developed in
\cite{Nobusawa1964,Barnes1966,Lister1971,DuttaKar2003,DuttaKar2004}; the
convention used below follows~\cite{ChandrasekharRaoPrasad2025}.  Heterogeneous
algebra provides the general many-sorted language~\cite{BirkhoffLipson1970}.
For the present problem, carrier and index equivalences must be classified
simultaneously, and the resulting quotients must be tested for binary
reduction.

The supporting representation lies in the complex-algebra tradition
\cite{JonssonTarski1951,Goldblatt1989} and uses the downset description of
prime-generated complete lattices~\cite{DaveyPriestley2002,GierzEtAl2003}.
It converts atomic quotient questions for complete ternary
$\Gamma$-semirings into finite congruence questions for alternating cores.

The results are organized in three layers.
\begin{enumerate}[label=\textnormal{(\roman*)},leftmargin=*]
\item For a two-branch extension $F_m(G;u,v)$ we determine the entire
congruence lattice as $\operatorname{Sub}(G)\times B_2$ and classify all typed
congruence pairs.  We then characterize reducibility of every quotient and,
for finite $G$, characterize quotient-critical irreducibility by a cyclic
$p$-group condition.
\item Specializing to $G=\mathbb Z_2$ gives a four-point family $H_m$ with
congruence lattice $B_3$, automorphism group $C_2$, and exactly $36$ typed
congruence pairs.  Its seven proper quotients are reducible and form five
isomorphism types; their exact binary-reduction counts are obtained for
$m=5$.
\item The powerset lift of $H_5$ gives a complete atomic ternary
$\Gamma$-semiring in which binary collapse appears precisely after passing to
a proper diagonal strong atom-saturated quotient.  A specified polynomial
inequation recovers the parity congruence on atoms.
\end{enumerate}

The representation and first-isomorphism propositions supply the passage
between finite cores and complete algebras.  The two-branch classification and
the quotient-criticality criterion provide the structural results used in the
four-point and complete-algebra specializations.

\section{Two-sorted relational coordinatization}\label{sec:coord}

\begin{definition}\label{def:tgs}
A \emph{ternary $\Gamma$-semiring} consists of additive commutative semigroups
$(S,+)$ and $(\Gamma,+)$ and a map
\[
(x,\alpha,y,\beta,z)\longmapsto x\alpha y\beta z
\quad(S\times\Gamma\times S\times\Gamma\times S\to S)
\]
that is distributive in the three carrier coordinates and satisfies
\begin{equation}
(x\alpha y\beta z)\gamma u\delta v
=x\alpha(y\beta z\gamma u)\delta v
=x\alpha y\beta(z\gamma u\delta v).
\label{eq:tgs-assoc}
\end{equation}
It is \emph{completely additive} if both reducts are complete lattices with
addition equal to join and the product preserves arbitrary joins in all five
coordinates.
\end{definition}

An element $p$ of a complete lattice $L$ is \emph{completely join-prime} if
$p\ne\bot$ and $p\leq\bigvee A$ implies $p\leq a$ for some $a\in A$.
Write $\Jp(L)$ for the poset of these elements.  The lattice is
\emph{prime-generated} if every element is the join of the completely
join-prime elements below it.  For a poset $P$, let $\Dwn(P)$ denote its
complete lattice of downsets.

\begin{lemma}[Downset coordinatization]\label{lem:downset}
If $L$ is a prime-generated complete lattice, then
\[
\eta_L(x)=\{p\in\Jp(L):p\leq x\}
\]
is a complete-lattice isomorphism $L\cong\Dwn(\Jp(L))$.  The completely
join-prime elements of $\Dwn(P)$ are the principal downsets.
\end{lemma}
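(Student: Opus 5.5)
The plan is to verify directly that $\eta_L$ is an order-embedding onto $\Dwn(\Jp(L))$; a surjective order-embedding between complete lattices is automatically a complete-lattice isomorphism, since arbitrary joins and meets are determined by the order. First, $\eta_L(x)$ is a downset of $\Jp(L)$, because $q\leq p\leq x$ gives $q\leq x$. Next, $\eta_L$ is monotone, and it reflects order by prime generation: if $\eta_L(x)\subseteq\eta_L(y)$, then
\[
x=\bigvee\eta_L(x)\leq\bigvee\eta_L(y)=y.
\]
In particular $\eta_L$ is injective.

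The step that needs the hypothesis on primes is surjectivity. Given a downset $D\subseteq\Jp(L)$, I would put $x=\bigvee D$ and show $\eta_L(x)=D$. The inclusion $D\subseteq\eta_L(x)$ is trivial. Conversely, if $p\in\Jp(L)$ and $p\leq\bigvee D$, then complete join-primeness gives some $d\in D$ with $p\leq d$, and since $D$ is a downset, $p\in D$. The case $D=\emptyset$ is covered as well: $x=\bot$, and no completely join-prime element lies below $\bot$ because $p\neq\bot$. I regard this as the main (though mild) point: it is exactly where the full complete join-primeness is used, not mere join-irreducibility.

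For the second assertion, fix a poset $P$. Joins in $\Dwn(P)$ are unions. The principal downset ${\downarrow}p$ is nonempty, and if ${\downarrow}p\subseteq\bigcup_i D_i$, then $p\in D_j$ for some $j$, so ${\downarrow}p\subseteq D_j$; hence ${\downarrow}p$ is completely join-prime. Conversely, let $D$ be completely join-prime. Then $D=\bigcup_{p\in D}{\downarrow}p$, so by primeness $D\subseteq{\downarrow}p$ for some $p\in D$. Since also ${\downarrow}p\subseteq D$, we get $D={\downarrow}p$. (The condition $D\neq\emptyset$ is needed so that the index set $D$ is nonempty.)
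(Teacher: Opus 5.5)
Your proof is correct and follows essentially the same route as the paper: injectivity from prime generation, surjectivity via $\eta_L(\bigvee D)=D$ using complete join-primeness, and principality via $D=\bigcup_{p\in D}{\downarrow}p$. The differences are cosmetic. You get preservation of joins from $\eta_L$ being a surjective order-embedding, whereas the paper uses the identity $\eta_L(\bigvee A)=\bigcup_{a\in A}\eta_L(a)$. You also verify that principal downsets are completely join-prime, a direction the paper leaves implicit.
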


\begin{proof}
Prime generation makes $\eta_L$ injective, and complete join-primality gives
$\eta_L(\bigvee A)=\bigcup_{a\in A}\eta_L(a)$.  If $U$ is a downset of
$\Jp(L)$, then $\eta_L(\bigvee U)=U$, proving surjectivity.  Finally,
$U=\bigcup_{p\in U}\mathord\downarrow p$ shows that a completely join-prime
downset must be principal.  This standard lattice fact is recorded to fix the
coordinates used below~\cite{DaveyPriestley2002,GierzEtAl2003}.
\end{proof}

Let $X,\Lambda$ be posets and
\[
R\subseteq X\times\Lambda\times X\times\Lambda\times X\times X.
\]
Write $R(x,\lambda,y,\mu,z;r)$, with the last entry the output.  We require
$R$ to be isotone in its five inputs and antitone in its output.  For nine
alternating inputs define
\begin{align}
L_R(r)&\Longleftrightarrow\exists u\,[R(x_1,\lambda_1,x_2,\lambda_2,x_3;u)
 \land R(u,\lambda_3,x_4,\lambda_4,x_5;r)],\notag\\
M_R(r)&\Longleftrightarrow\exists u\,[R(x_2,\lambda_2,x_3,\lambda_3,x_4;u)
 \land R(x_1,\lambda_1,u,\lambda_4,x_5;r)],\label{eq:relassoc}\\
N_R(r)&\Longleftrightarrow\exists u\,[R(x_3,\lambda_3,x_4,\lambda_4,x_5;u)
 \land R(x_1,\lambda_1,x_2,\lambda_2,u;r)].\notag
\end{align}
An \emph{ordered alternating quinary frame} is such a relation for which
$L_R(r),M_R(r),N_R(r)$ are equivalent for all inputs and outputs.

For downsets $A,C,E\subseteq X$ and $B,D\subseteq\Lambda$, set
\begin{align}
R^\#[A,B,C,D,E]=\{r:\ &R(a,\lambda,c,\mu,e;r)
\text{ for some }a\in A,c\in C,e\in E,\notag\\[-2mm]
&\lambda\in B,\mu\in D\}.\label{eq:complex}
\end{align}
Its \emph{downset complex} is
\[
\Cpx(R)=(\Dwn(X),\cup,\varnothing;
\Dwn(\Lambda),\cup,\varnothing;R^\#).
\]

\begin{proposition}[Relational coordinatization]\label{prop:coord}
An order-compatible relation $R$ satisfies~\eqref{eq:relassoc} if and only if
$\Cpx(R)$ is a completely additive ternary $\Gamma$-semiring.  Conversely,
if $\T=(S,\Gamma)$ is a completely additive ternary $\Gamma$-semiring whose
two reducts are prime-generated, then
\begin{equation}
R_{\T}(a,\lambda,b,\mu,c;r)
\quad\Longleftrightarrow\quad r\leq a\lambda b\mu c
\label{eq:canonicalR}
\end{equation}
on $X=\Jp(S)$ and $\Lambda=\Jp(\Gamma)$ defines an ordered alternating
quinary frame, and
\[
(\eta_S,\eta_\Gamma):\T\longrightarrow\Cpx(R_{\T})
\]
is an isomorphism.
\end{proposition}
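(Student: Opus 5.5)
The proof splits into the two directions of the statement, and both reduce to the complex-algebra computation of iterated products.

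\emph{First direction.} Fix an order-compatible $R$. Order-compatibility (isotone inputs, antitone output) guarantees that $R^\#[A,B,C,D,E]$ is again a downset of $X$. It is an upset-closed union over witnesses, hence a downset, because $r'\le r$ and $R(\dots;r)$ give $R(\dots;r')$.

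By \eqref{eq:complex}, $R^\#$ is defined as a union over witnesses. It therefore commutes with arbitrary unions in each of its five coordinates; in particular $R^\#$ with an empty argument is $\varnothing$. Since $\Dwn(X)$ and $\Dwn(\Lambda)$ are complete lattices with join $=\cup$, complete additivity and distributivity come for free. The only content is associativity \eqref{eq:tgs-assoc}.

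I will compute the left bracketing $R^\#[R^\#[A_1,B_1,A_2,B_2,A_3],B_3,A_4,B_4,A_5]$ directly. It equals the set of $r$ such that $L_R(r)$ holds for some inputs $x_i\in A_i$, $\lambda_j\in B_j$. One point needs care: the intermediate $u$ ranges over the downset $R^\#[\dots]$, but membership there is exactly the existence of an $R$-witness. The middle and right bracketings similarly give $M_R$ and $N_R$.

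If $L_R,M_R,N_R$ are equivalent, the three bracketings agree. Conversely, I will apply associativity to principal downsets $A_i={\downarrow}x_i$, $B_j={\downarrow}\lambda_j$. Isotonicity of $R$ in the inputs lets me replace witnesses below $x_i$ by $x_i$ itself. So $r$ lies in the left bracketing iff $L_R(r)$, and likewise for $M_R,N_R$, which yields \eqref{eq:relassoc}.

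\emph{Second direction.} Let $\T=(S,\Gamma)$ have prime-generated reducts, and define $R_\T$ by \eqref{eq:canonicalR}. Order-compatibility is immediate from monotonicity of the product, which follows from join preservation.

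The key identity is
\[
\eta_S(a\alpha b\beta c)=R_\T^\#[\eta_S a,\eta_\Gamma\alpha,\eta_S b,\eta_\Gamma\beta,\eta_S c].
\]
To prove it, write each argument as the join of the completely join-prime elements below it (prime generation, Lemma~\ref{lem:downset}), and expand the product by complete additivity in all five coordinates. A completely join-prime $r$ then lies below the resulting join iff it lies below a single term $p\lambda q\mu s$. The inclusion in the other direction is monotonicity.

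The identity shows that $(\eta_S,\eta_\Gamma)$ preserves the product. Lemma~\ref{lem:downset} already makes each component a complete-lattice isomorphism, so $(\eta_S,\eta_\Gamma)$ is an isomorphism onto $\Cpx(R_\T)$.

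It remains to show that $R_\T$ is an ordered alternating quinary frame. Since $\Cpx(R_\T)$ is isomorphic to the associative $\T$, it is a completely additive ternary $\Gamma$-semiring. The first direction then gives \eqref{eq:relassoc}.

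This argument is not circular, because the first direction's proof of associativity $\Rightarrow$ \eqref{eq:relassoc} only used principal downsets. Alternatively, I can check \eqref{eq:relassoc} directly: both $L_{R_\T}(r)$ and $M_{R_\T}(r)$ are equivalent to $r\le$ the corresponding bracketed product, using complete join-primality of $r$ to extract the intermediate $u$.

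The main obstacle I expect is the careful handling of witnesses for the intermediate variable $u$. In the first direction it must be justified that $u$ ranges over a downset without loss, and in the second that a completely join-prime $u$ below $x_1\lambda_1x_2\lambda_2x_3$ can be chosen. The latter needs the prime-generation expansion of that product, applied inside the outer product via complete additivity.
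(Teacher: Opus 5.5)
Your proof is correct and follows essentially the paper's route: union preservation of $R^\#$ together with evaluation on principal downsets for the first equivalence, and prime expansion (complete additivity plus complete join-primality) for the identity $\eta_S(x\alpha y\beta z)=R_{\T}^{\#}[\eta_S x,\eta_\Gamma\alpha,\eta_S y,\eta_\Gamma\beta,\eta_S z]$. The only variation is that your main argument obtains \eqref{eq:relassoc} for $R_{\T}$ by transporting associativity through the isomorphism and then invoking the first direction, whereas the paper performs the direct intermediate-prime check you give as your alternative; both are valid, and neither is circular.
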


\begin{proof}
The operation~\eqref{eq:complex} preserves arbitrary unions.  On principal
downsets, membership of $r$ in the three nested products is respectively
$L_R(r),M_R(r),N_R(r)$.  Since every downset is a union of principal
downsets, complete additivity extends the equivalence to arbitrary inputs.

For~\eqref{eq:canonicalR}, expand every input and every intermediate product
as a join of completely join-prime elements.  A prime $r$ lies below a nested
product exactly when an intermediate prime witnesses the corresponding line
of~\eqref{eq:relassoc}.  Equation~\eqref{eq:tgs-assoc} therefore gives the
relational identities.  A second prime expansion gives
\[
\eta_S(x\alpha y\beta z)
=R_{\T}^{\#}[\eta_S(x),\eta_\Gamma(\alpha),\eta_S(y),
\eta_\Gamma(\beta),\eta_S(z)].
\]
Lemma~\ref{lem:downset} completes the proof.
\end{proof}

When both orders are discrete, downsets are arbitrary subsets.  A discrete
frame is \emph{functional} if every input tuple has exactly one output,
written $Q(x,\lambda,y,\mu,z)$.  Then~\eqref{eq:relassoc} is precisely the
three typed associativity identities for $Q$.

We call a completely additive complete atomic algebra \emph{atom-total} if
every product of three carrier atoms and two index atoms is a carrier atom.

\begin{corollary}\label{cor:atomiccoord}
Completely additive ternary $\Gamma$-semirings with complete atomic Boolean
reducts are exactly the powerset complexes of discrete relational frames.
The canonical frame is functional exactly when every product of three carrier
atoms and two index atoms is a carrier atom.
\end{corollary}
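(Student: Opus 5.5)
The plan is to specialize Proposition~\ref{prop:coord} to discrete orders, and then examine what the canonical relation does on atoms.

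\emph{Powerset complexes are such algebras.} Let $R$ be a discrete frame. Every subset is a downset, so $\Dwn(X)=\Pow(X)$ and $\Dwn(\Lambda)=\Pow(\Lambda)$, and both are complete atomic Boolean algebras. The first half of Proposition~\ref{prop:coord} then shows that $\Cpx(R)$ is a completely additive ternary $\Gamma$-semiring.

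\emph{Every such algebra is a powerset complex.} Let $\T=(S,\Gamma)$ have complete atomic Boolean reducts. I first identify the completely join-prime elements of a complete atomic Boolean algebra as exactly its atoms:
\begin{itemize}[leftmargin=*]
\item An atom $p$ is completely join-prime. If $p\le\bigvee A$, then $p=p\wedge\bigvee A=\bigvee_{a\in A}(p\wedge a)$ by infinite distributivity, which holds in complete atomic Boolean algebras. Hence some $p\wedge a\neq\bot$, and since $p$ is an atom, $p\le a$.
\item A non-atom $q\neq\bot$ is not completely join-prime. It is the join of the atoms below it, and it lies below none of them.
\end{itemize}
Atomicity then gives prime generation. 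The induced order on $\Jp$ is discrete, because distinct atoms are incomparable. So the second half of Proposition~\ref{prop:coord} yields the discrete frame $R_{\T}$ on the atoms, together with an isomorphism $\T\cong\Cpx(R_{\T})$.

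\emph{Functionality.} For atoms, $R_{\T}(a,\lambda,b,\mu,c;r)$ holds iff $r\le a\lambda b\mu c$. The set of atoms $r$ satisfying this is $\eta_S(a\lambda b\mu c)$, and it determines the product, since $\eta_S$ is injective. This set is a singleton exactly when $a\lambda b\mu c$ is an atom, because in an atomic Boolean algebra an element lies above exactly one atom iff it is an atom; $\bot$ lies above no atoms. Hence $R_{\T}$ is functional, meaning every input tuple has exactly one output, iff every such atomic product is a carrier atom.

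\emph{Obstacle.} The main point to watch is the case where the product is $\bot$. Functionality requires \emph{exactly one} output, so a product equal to $\bot$ gives an input tuple with no outputs. This is consistent with the corollary's phrasing: it says "is a carrier atom", which excludes $\bot$, and it does not say "at most an atom".
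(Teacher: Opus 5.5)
Your proof is correct and takes the route the paper intends: the paper gives no separate proof of this corollary and treats it as the discrete-order specialization of Proposition~\ref{prop:coord}, and your argument supplies the omitted details. Those details are that the completely join-prime elements of a complete atomic Boolean algebra are exactly its atoms, so $\Jp$ is discretely ordered and $\Dwn=\Pow$, and that $\eta_S(a\lambda b\mu c)$ is a singleton precisely when the product is an atom, with a $\bot$ product giving an empty fibre, consistent with the phrasing ``is a carrier atom.''
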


\section{Typed congruences and atomic quotients}\label{sec:typedquot}

From now on frames are discrete.  A pair $(\theta,\phi)$ of equivalence
relations on $X$ and $\Lambda$ is a \emph{strong frame congruence} if
\[
\{[r]_\theta:R(x,\lambda,y,\mu,z;r)\}
\]
depends only on $[x]_\theta,[y]_\theta,[z]_\theta$ and
$[\lambda]_\phi,[\mu]_\phi$.  The quotient relation is
\begin{align}
\overline R([x],[\lambda],[y],[\mu],[z];[r])
\Longleftrightarrow [r]\in
\{[s]:R(x,\lambda,y,\mu,z;s)\}.
\label{eq:quotR}
\end{align}
For a functional core $Q$, this condition reduces to
\begin{equation}
x_i\mathrel\theta x_i',\quad
\lambda_j\mathrel\phi\lambda_j'
\Longrightarrow
Q(x_1,\lambda_1,x_2,\lambda_2,x_3)
\mathrel\theta
Q(x_1',\lambda_1',x_2',\lambda_2',x_3').
\label{eq:typedcong}
\end{equation}
Thus it is the ordinary congruence condition for a two-sorted algebra
\cite{BirkhoffLipson1970,BurrisSankappanavar1981}.

For $A,A'\subseteq X$, put
\[
A\equiv_\theta A'\quad\Longleftrightarrow\quad
\{[a]_\theta:a\in A\}=\{[a']_\theta:a'\in A'\},
\]
and define $\equiv_\phi$ analogously.

If $R$ and $T$ are discrete frames, a pair of maps $(f,g)$ on their carrier
and index sorts is a \emph{strong morphism} when, for every input tuple,
\[
\{f(r):R(x,\lambda,y,\mu,z;r)\}
=\{s:T(fx,g\lambda,fy,g\mu,fz;s)\}.
\]
For a functional frame this is the usual two-sorted homomorphism condition.
We call a congruence pair of a powerset complex \emph{atom-saturated} when it
has the form $(\equiv_\theta,\equiv_\phi)$ for equivalences on the two sets of
atoms.

\begin{proposition}[Typed quotient and first isomorphism]\label{prop:firstiso}
If $(\theta,\phi)$ is a strong frame congruence, then~\eqref{eq:quotR} is well
defined, the quotient is a discrete alternating quinary frame, and the
canonical pair of maps is strong.  Moreover,
\[
\Cpx(R)/(\equiv_\theta,\equiv_\phi)
\cong\Cpx(R/(\theta,\phi)).
\]
If $(f,g):R\to T$ is a surjective strong morphism, then
$(\ker f,\ker g)$ is a strong frame congruence and
$R/(\ker f,\ker g)\cong T$.
\end{proposition}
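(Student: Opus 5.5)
The proof splits into three parts: the quotient frame, the complex isomorphism, and the first isomorphism theorem. Each part unwinds the definitions and reuses the nested-product description of associativity from Proposition~\ref{prop:coord}.

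\emph{Well-definedness and the quotient frame.} Well-definedness of~\eqref{eq:quotR} is exactly the strong frame congruence hypothesis: the set $\{[s]:R(x,\lambda,y,\mu,z;s)\}$ depends only on the classes of the inputs. For associativity I will show that $\overline L(\,[r]\,)$ holds iff $L_R(r')$ holds for some $r'\mathrel\theta r$, and similarly for $M$ and $N$. The key point is the intermediate witness $[u]$. If $\overline R(\dots;[u])$ and $\overline R([u],\dots;[r])$ hold, then strongness lets me choose a representative $u'\in[u]$ with $R(x_1,\lambda_1,x_2,\lambda_2,x_3;u')$. Strongness applied again, now with $u'$ in place of $u$, gives some $r'\in[r]$ with $R(u',\lambda_3,x_4,\lambda_4,x_5;r')$. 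This re-choosing of representatives is the only delicate step, and I expect it to be the main obstacle. Once it is done, the equivalence $L_R\Leftrightarrow M_R\Leftrightarrow N_R$ descends to $\overline L\Leftrightarrow\overline M\Leftrightarrow\overline N$. The canonical projection $(\pi_\theta,\pi_\phi)$ is strong by~\eqref{eq:quotR} itself.

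\emph{The complex isomorphism.} I will define $\Psi:\Cpx(R)\to\Cpx(R/(\theta,\phi))$ by $A\mapsto\{[a]_\theta:a\in A\}$ on carriers, and analogously on indices. By definition the kernel of $\Psi$ is $(\equiv_\theta,\equiv_\phi)$. Surjectivity follows by choosing representatives, and $\Psi$ preserves arbitrary unions. To check that $\Psi$ respects products, I will write $R^\#[A,B,C,D,E]$ as the union of $R^\#$ over singletons. On singletons the image equals $\overline R^\#$ of the image classes by strongness, and union-preservation extends this to arbitrary inputs. Hence $(\equiv_\theta,\equiv_\phi)$ is a congruence of the complex, and $\Psi$ induces the stated isomorphism.

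\emph{The first isomorphism theorem.} Let $(f,g)$ be a surjective strong morphism. If $x\mathrel{\ker f}x'$, and likewise for the other inputs, then the strong condition gives
\[
f\{r:R(x,\dots;r)\}=T(fx,\dots)=T(fx',\dots)=f\{r:R(x',\dots;r)\}.
\]
Equal $f$-images are the same thing as equal sets of $\ker f$-classes, so $(\ker f,\ker g)$ is a strong frame congruence. The induced bijections $\bar f:[x]\mapsto f(x)$ and $\bar g$ then carry $\overline R$ onto $T$, again directly from the strong condition. Surjectivity of $(f,g)$ ensures that every $T$-tuple is reached, which gives $R/(\ker f,\ker g)\cong T$.
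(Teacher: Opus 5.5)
Your proposal is correct and follows essentially the same route as the paper. Strong invariance gives well-definedness, and replacing the intermediate representative makes the quotient composites the class-images of $L_R,M_R,N_R$. The class-image map $A\mapsto\{[a]:a\in A\}$ preserves unions and the complex product and has kernels $\equiv_\theta,\equiv_\phi$, and strongness plus surjectivity give the first isomorphism statement. You spell out steps the paper leaves implicit, for instance that equal $f$-images amount to equal sets of $\ker f$-classes.
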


\begin{proof}
Strong-congruence invariance gives well-definedness.  In a nested quotient
product, representatives of the intermediate class can be replaced without
changing the possible output classes; consequently its three relational
composites are the images of $L_R,M_R,N_R$.  Quotient associativity follows.
On powersets the maps $A\mapsto\{[a]:a\in A\}$ preserve unions and the
complex product and have kernels $\equiv_\theta,\equiv_\phi$.  The final
statement follows because strongness identifies output classes with target
outputs and surjectivity identifies the quotient sorts with the target sorts.
\end{proof}

The proposition is the bridge used later: classifying strong congruence pairs
of a finite core classifies the corresponding atom-saturated complete
quotients of its powerset algebra.

\section{Binary reductions and quotient-criticality}\label{sec:binary}

A diagonal functional core $(X,X,Q)$ is \emph{binary reducible} if an
associative operation $g:X^2\to X$ satisfies
\begin{equation}
Q(x_1,x_2,x_3,x_4,x_5)
=g(g(g(g(x_1,x_2),x_3),x_4),x_5).
\label{eq:reduction}
\end{equation}
An \emph{atomic binary collapse} of $\Cpx(Q)$ is an associative,
arbitrary-union-preserving operation $\star$ on $\Pow(X)$ that sends pairs of
singletons to singletons and whose fivefold product is the complex operation.

\begin{lemma}\label{lem:collapse}
Binary reductions of $Q$ correspond bijectively to atomic binary collapses of
$\Cpx(Q)$ by
\[
A\star B=\{g(a,b):a\in A,\ b\in B\}.
\]
\end{lemma}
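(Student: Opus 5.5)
We must show two things: the displayed formula sends every binary reduction $g$ to an atomic binary collapse, and every atomic binary collapse arises from exactly one $g$ in this way.

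\emph{From $g$ to $\star$.} Given a binary reduction $g$, define $A\star B=\{g(a,b):a\in A,\ b\in B\}$. This is the complex (powerset) extension of $g$. It preserves arbitrary unions in each argument, and on singletons it gives $\{a\}\star\{b\}=\{g(a,b)\}$, which is again a singleton. Associativity of $\star$ follows from associativity of $g$ by unfolding: both $(A\star B)\star C$ and $A\star(B\star C)$ equal $\{g(g(a,b),c):a\in A,b\in B,c\in C\}$. Since every complex extension is computed elementwise, the fivefold left-bracketed product satisfies
\[
(((A_1\star A_2)\star A_3)\star A_4)\star A_5=\{g(g(g(g(a_1,a_2),a_3),a_4),a_5):a_i\in A_i\}.
\]
By~\eqref{eq:reduction} this set is $\{Q(a_1,\dots,a_5):a_i\in A_i\}$. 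This is exactly the complex operation $Q^\#$ of~\eqref{eq:complex} for a functional discrete frame, so $\star$ is an atomic binary collapse.

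\emph{From $\star$ to $g$.} Let $\star$ be an atomic binary collapse. Because it sends pairs of singletons to singletons, we may define $g$ by $\{a\}\star\{b\}=\{g(a,b)\}$. Associativity of $\star$ restricted to singletons gives associativity of $g$. Evaluating the fivefold product on singletons and comparing it with the complex operation yields
\[
\{g(g(g(g(x_1,x_2),x_3),x_4),x_5)\}=\{Q(x_1,\dots,x_5)\},
\]
which is~\eqref{eq:reduction}. So $g$ is a binary reduction.

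\emph{The two constructions are mutually inverse.} Starting from $g$, extending to $\star$, and restricting back to singletons clearly returns $g$. The substantive direction is the other one: we must show that $\star$ is determined by its values on singletons, i.e.\ that $A\star B=\bigcup_{a\in A,b\in B}\{a\}\star\{b\}$. Write $A=\bigcup_{a\in A}\{a\}$ and $B=\bigcup_{b\in B}\{b\}$; preservation of arbitrary unions in each argument separately then gives the identity. Two details need checking. First, an empty union is included, so $\varnothing\star B=\varnothing=A\star\varnothing$; this is consistent with the elementwise formula. Second, preservation of arbitrary unions is meant separately in each argument, and that is exactly what the distribution over the double union uses. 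With this identity, $\star$ coincides with the complex extension of the $g$ it determines, so the correspondence is a bijection. This last step is the only one that is not immediate.

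A remark on the order of products: the definition of atomic binary collapse states that the fivefold product equals the complex operation, without fixing a bracketing. Because $\star$ is associative, all bracketings of the fivefold product agree, so comparing with the left-nested form in~\eqref{eq:reduction} loses nothing.
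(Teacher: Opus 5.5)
Your proposal is correct and follows the same route as the paper: you take the setwise extension of $g$ in one direction, restrict $\star$ to singleton pairs in the other, and use preservation of arbitrary unions (including the empty union) to show that a collapse is recovered from its restriction. You simply spell out the details that the paper's three-sentence proof leaves implicit.
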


\begin{proof}
Setwise extension preserves associativity and unions and gives the required
fivefold product.  Conversely, restriction of a collapse to singleton pairs
defines $g$; associativity and~\eqref{eq:reduction} follow on singletons.
Complete union preservation recovers the collapse from this restriction.
\end{proof}

\begin{proposition}[Descent criterion]\label{prop:descent}
Let $g$ reduce $Q$ and let $(\theta,\theta)$ be a strong diagonal congruence.
The quotient $Q/\theta$ is reduced by
\[
\overline g([x],[y])=[g(x,y)]
\]
if and only if $\theta$ is a semigroup congruence of $(X,g)$.  Equivalently,
the associated atomic collapse descends to $\Pow(X/\theta)$.
\end{proposition}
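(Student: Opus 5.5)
The plan is to show that both directions reduce to the single question of whether $\overline g$ is a well-defined map on $X/\theta$. Once it is well defined, associativity and the reduction identity follow from the corresponding facts for $g$ by passing to classes.

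\emph{If direction.} Suppose $\theta$ is a semigroup congruence of $(X,g)$. Then $\overline g([x],[y])=[g(x,y)]$ is well defined. Associativity of $\overline g$ follows at once from associativity of $g$. Next I verify the identity \eqref{eq:reduction} for $Q/\theta$. By Proposition~\ref{prop:firstiso}, the quotient core is functional with
\[
(Q/\theta)([x_1],\dots,[x_5])=[Q(x_1,\dots,x_5)].
\]
This holds because $(\theta,\theta)$ is strong, so the set of output classes is the single class of $Q(x_1,\dots,x_5)$. Combining this with \eqref{eq:reduction} and repeatedly applying the definition of $\overline g$ gives
\[
[Q(x_1,\dots,x_5)]=\overline g(\overline g(\overline g(\overline g([x_1],[x_2]),[x_3]),[x_4]),[x_5]).
\]
So $\overline g$ reduces $Q/\theta$.

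\emph{Only if direction.} Suppose the formula $\overline g([x],[y])=[g(x,y)]$ defines an operation reducing $Q/\theta$. In particular the formula is well defined, which means exactly that $x\mathrel\theta x'$ and $y\mathrel\theta y'$ imply $g(x,y)\mathrel\theta g(x',y')$. That is the statement that $\theta$ is a semigroup congruence of $(X,g)$. This direction is essentially definitional, so the main care lies in reading ``reduced by $\overline g$'' as including well-definedness.

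\emph{Collapse reformulation.} By Lemma~\ref{lem:collapse}, the collapse attached to $g$ is the setwise extension $A\star B=\{g(a,b)\}$. The natural meaning of ``descends'' is the following: the quotient map $A\mapsto\{[a]:a\in A\}$, whose kernel is $\equiv_\theta$, intertwines $\star$ with some union-preserving operation on $\Pow(X/\theta)$. Equivalently, $\equiv_\theta$ is compatible with $\star$.

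To show this is equivalent to the condition above, I compare on singletons. Compatibility of $\equiv_\theta$ with $\star$ restricted to singletons is precisely compatibility of $\theta$ with $g$. Conversely, if $\theta$ is compatible with $g$, then for arbitrary sets with $A\equiv_\theta A'$ and $B\equiv_\theta B'$ every class $[g(a,b)]$ equals some $[g(a',b')]$, and vice versa, so $A\star B\equiv_\theta A'\star B'$. The descended operation is then the setwise extension of $\overline g$. By Lemma~\ref{lem:collapse} applied to $Q/\theta$, together with the isomorphism $\Cpx(Q)/\equiv_\theta\cong\Cpx(Q/\theta)$ of Proposition~\ref{prop:firstiso}, it is an atomic binary collapse of the quotient.

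The only real obstacle is pinning down what ``descends'' means. Once it is fixed as compatibility with the kernel $\equiv_\theta$, everything is a routine singleton-versus-union argument.
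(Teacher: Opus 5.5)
Your proposal is correct and follows the paper's own argument. Well-definedness of $\overline g$ is exactly the semigroup-congruence condition, the reduction identity passes to classes, and the powerset statement follows by setwise extension, with the converse obtained by restricting to singletons. You also make explicit two points the paper leaves implicit: strongness makes $Q/\theta$ functional with $(Q/\theta)([x_1],\dots,[x_5])=[Q(x_1,\dots,x_5)]$, and ``descends'' means compatibility of $\equiv_\theta$ with $\star$.
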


\begin{proof}
The formula for $\overline g$ is well defined exactly when $\theta$ is a
semigroup congruence.  Under this condition, passing~\eqref{eq:reduction} to
classes proves that $\overline g$ reduces $Q/\theta$.  The powerset assertion
then follows by setwise extension, and its restriction to singletons gives
the converse.
\end{proof}

An irreducible $m$-ary semigroup is called \emph{quotient-critically
irreducible} if each quotient by a nonidentity congruence is binary reducible.
It combines source irreducibility with the appearance of binary reductions
after every nontrivial identification; those quotient reductions need not
descend from a binary operation on the source.

\section{Two-branch extensions of abelian groups}\label{sec:twobranch}

Fix an odd integer $m\geq3$.  Let $(G,+,0)$ be an abelian group whose
exponent divides $m-1$, and choose $u,v\in G$.  On the disjoint union
\[
X(G)=G\mathbin{\dot\cup}\{a,b\}
\]
define $F_m(G;u,v):X(G)^m\to X(G)$ by
\begin{equation}
F_m(x_1,\ldots,x_m)=
\begin{cases}
a,&x_1=\cdots=x_m=a,\\
b,&x_1=\cdots=x_m=b,\\
\tau(x_1)+\cdots+\tau(x_m),&\text{otherwise},
\end{cases}
\label{eq:generalF}
\end{equation}
where $\tau(a)=u$, $\tau(b)=v$, and $\tau(g)=g$ for $g\in G$.
This is the strong $m$-ary semilattice over two singleton components and the
bottom group component $G$, with connecting maps $a\mapsto u$ and
$b\mapsto v$.

For a subgroup $N\leq G$ and $\varepsilon,\delta\in\{0,1\}$, let
$\theta_N^{\varepsilon,\delta}$ be the following equivalence on $X(G)$.  Its
restriction to $G$ is congruence modulo $N$.  If $\varepsilon=1$, adjoin $a$
to the coset $u+N$; otherwise $a$ is a singleton.  If $\delta=1$, adjoin $b$
to $v+N$; otherwise $b$ is a singleton.  In particular, when both flags are
one and $u-v\in N$, the elements $a$ and $b$ belong to the same class.
Write $B_k$ for the Boolean lattice on $k$ atoms.

\begin{theorem}[Congruences of two-branch extensions]
\label{thm:generalcongruences}
The operation $F_m(G;u,v)$ is a symmetric idempotent $m$-ary semigroup, and
\begin{equation}
\Con(F_m(G;u,v))
=\{\theta_N^{\varepsilon,\delta}:N\leq G,
\ \varepsilon,\delta\in\{0,1\}\}.
\label{eq:generalconlist}
\end{equation}
Moreover,
\[
(N,\varepsilon,\delta)\longmapsto
\theta_N^{\varepsilon,\delta}
\]
is a lattice isomorphism
\[
\operatorname{Sub}(G)\times B_2
\cong\Con(F_m(G;u,v)).
\]
Regard $F_m(G;u,v)$ as an alternating two-sorted operation on two copies of
$X(G)$.  A carrier--index pair $(\theta,\phi)$ is a typed congruence if and
only if
\begin{equation}
\theta\in\Con(F_m(G;u,v))
\qquad\text{and}\qquad \phi\subseteq\theta.
\label{eq:generaltyped}
\end{equation}
\end{theorem}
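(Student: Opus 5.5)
The plan is to reduce everything to the group law on $G$ through the map $\tau$, using two arithmetic facts. Since the exponent of $G$ divides $m-1$, we have $(m-1)g=0$ and hence $mg=g$ for every $g\in G$.

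First, the semigroup claims. Symmetry of $F_m$ is clear from the definition. Idempotence holds on $G$ because $mg=g$, and on $a,b$ by definition. For associativity, consider a composite $F_m(x_1,\dots,F_m(x_i,\dots,x_{i+m-1}),\dots,x_{2m-1})$. I will show it equals $b$ if all $x_j=b$, equals $a$ if all $x_j=a$, and equals $\sum_j\tau(x_j)$ otherwise; this expression is independent of $i$.
\begin{itemize}
\item If the inner block is constant $a$ (resp. $b$), its value $a$ (resp. $b$) has $\tau$-value $u=mu$ (resp. $v=mv$), which is what the block contributes to the sum.
\item If the inner block is not constant, its value lies in $G$, where $\tau$ is the identity, and the outer tuple is then nonconstant.
\end{itemize}
So every bracketing gives the same result.

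Second, the congruence list. I will check directly that each $\theta_N^{\varepsilon,\delta}$ is a congruence. The key observation is that $x\mathrel{\theta_N^{\varepsilon,\delta}}y$ implies $\tau(x)\equiv\tau(y)\pmod N$, because $a$ is merged only into $u+N$ and $b$ only into $v+N$. Two related nonconstant tuples therefore have outputs congruent mod $N$. If one tuple is constant $a$ and the other is not, every entry of the other lies in $u+N$, so its output is $\equiv mu=u$, which is the class of $a$ when $\varepsilon=1$; the case of $b$ is the same.

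Conversely, let $\theta$ be a congruence. Because $0\in G$ and $m\ge3$, the translation $F_m(g,k,0,\dots,0)=g+k$ shows that $\theta|_G$ is compatible with addition. Hence $\theta|_G$ is congruence modulo a subgroup $N$. Next, from $F_m(x,0,\dots,0)=\tau(x)$, any relation $a\mathrel\theta x$ gives $u\mathrel\theta\tau(x)$.
\begin{itemize}
\item If $x\in G$, then $a\mathrel\theta x\mathrel\theta u$, so $a\in u+N$.
\item If $x=b$, then $a=F_m(a,\dots,a)\mathrel\theta F_m(a,\dots,a,b)=(m-1)u+v=v$, and $v\mathrel\theta u$, so again $a$ lies in the class of $u$.
\end{itemize}
Thus a nonsingleton class of $a$ forces $\varepsilon=1$, and symmetrically for $b$, so $\theta=\theta_N^{\varepsilon,\delta}$. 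For the lattice isomorphism I will show that $\theta_N^{\varepsilon,\delta}\subseteq\theta_{N'}^{\varepsilon',\delta'}$ holds iff $N\le N'$, $\varepsilon\le\varepsilon'$ and $\delta\le\delta'$. This uses that $\varepsilon=1$ puts $a$ into a nonempty coset, so the flags are recoverable. An order-isomorphism of lattices is a lattice isomorphism.

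Third, typed pairs. Sufficiency is immediate: if $\theta$ is a congruence and $\phi\subseteq\theta$, then $\phi$-related index entries are $\theta$-related, so \eqref{eq:typedcong} holds. For necessity, suppose $(\theta,\phi)$ is typed. Compatibility of $\theta$ at the odd positions, transported to every position by symmetry of $F_m$, shows $\theta\in\Con(F_m)$. So $\theta=\theta_N^{\varepsilon,\delta}$.

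The step I expect to need the most care is $\phi\subseteq\theta$. Take $\lambda\mathrel\phi\lambda'$ and change only position $2$ of the constant tuple $(\lambda,\dots,\lambda)$. This gives $\lambda\mathrel\theta(m-1)\tau(\lambda)+\tau(\lambda')=\tau(\lambda')$, and symmetrically $\lambda'\mathrel\theta\tau(\lambda)$. I will then run a short case analysis using the explicit form of $\theta_N^{\varepsilon,\delta}$ to get $\lambda\mathrel\theta\lambda'$.
\begin{itemize}
\item If both are in $G$, this is immediate.
\item If $\lambda=a$ and $\lambda'\in G$, we get $\lambda'\mathrel\theta u$ and $a\mathrel\theta\lambda'$ directly.
\item If $\lambda=a$ and $\lambda'=b$, we get $a\mathrel\theta v$ and $b\mathrel\theta u$. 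This forces $\varepsilon=\delta=1$ with $u-v\in N$, so $a$ and $b$ share a class.
\end{itemize}
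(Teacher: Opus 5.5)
Your proposal is correct. Its core is the same as the paper's: the contexts $F_m(g,k,0,\dots,0)=g+k$ and $F_m(x,0,\dots,0)=\tau(x)$, comparing the all-$a$ tuple with a tuple containing one $b$, the order-embedding for the lattice isomorphism, and symmetry plus a single index slot for typed pairs.

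The routes differ in two supporting steps:
\begin{itemize}
\item \emph{Associativity.} The paper cites the strong $m$-ary semilattice construction of Devillet and Mathonet. You verify directly that every bracketing evaluates to $a$, $b$, or $\sum_j\tau(x_j)$.
\item \emph{Each $\theta_N^{\varepsilon,\delta}$ is a congruence.} The paper exhibits it as the kernel of an explicit homomorphism. The target is $F_m(G/N;u+N,v+N)$, a one-branch quotient, or the $m$-fold group on $G/N$, according to the flags. You check compatibility directly from the invariant $x\mathrel\theta y\Rightarrow\tau(x)\equiv\tau(y)\pmod N$.
\end{itemize}
Your argument is fully self-contained. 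The paper's kernel argument has a payoff you do not get: it identifies the quotient algebras themselves, and Theorem~\ref{thm:generalcritical} relies on that.

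In the typed part, the case $a\mathrel\phi b$ is handled slightly differently. The paper adds the zero context to get $u\mathrel\theta v$ and concludes $a\mathrel\theta v\mathrel\theta u\mathrel\theta b$ without using the classification. You read $a\mathrel\theta b$ off the explicit form of $\theta_N^{\varepsilon,\delta}$. Both work.

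One wording point to fix. Where you write ``nonconstant'' you should say ``neither identically $a$ nor identically $b$'', because constant tuples in $G$ fall under the sum clause of \eqref{eq:generalF}. Likewise, in the congruence check it is $\tau$ of each entry, not the entry itself, that lies in $u+N$; the all-$b$ subcase is trivial there. With that reading every step goes through.
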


\begin{proof}
The $m$-fold group sum on $G$ is idempotent because $(m-1)g=0$ for every
$g\in G$.  The maps from the singleton components are $m$-ary
homomorphisms for the same reason.  Thus~\eqref{eq:generalF} is the strong
$m$-ary semilattice determined by these data, so it is associative; symmetry
and idempotence are also immediate from the formula
\cite[Proposition~4.11]{DevilletMathonet2021}.

Let $\rho$ be a congruence and restrict it to $G$.  Since
\[
F_m(g,h,0,\ldots,0)=g+h,
\]
this restriction is a group congruence, hence congruence modulo a unique
subgroup $N\leq G$.  If $a\mathrel\rho g$ for some $g\in G$, comparison in
the context $(\,\cdot\,,0,\ldots,0)$ gives $u\mathrel\rho g$; therefore
$g\in u+N$.  Conversely, transitivity shows that attachment of $a$ to one
member of $u+N$ attaches it to that entire coset.  The same argument applies
to $b$ and $v+N$.  Finally, if $a\mathrel\rho b$, the zero context gives
$u-v\in N$, while comparison of the all-$a$ tuple with a tuple containing one
$b$ gives $a\mathrel\rho v$.  Thus both exceptional elements are attached to
their indicated cosets.  It follows that $\rho$ has exactly the form
$\theta_N^{\varepsilon,\delta}$.

Each listed equivalence is a congruence.  For
$(\varepsilon,\delta)=(0,0)$ it is the kernel of the natural homomorphism
\[
F_m(G;u,v)\longrightarrow F_m(G/N;u+N,v+N)
\]
that fixes $a,b$.  If only $a$ is attached, map $a$ to $u+N$, map $G$ to
$G/N$, and retain $b$ as the single exceptional branch; the resulting map is
a homomorphism by~\eqref{eq:generalF} and has kernel
$\theta_N^{1,0}$.  The case $\theta_N^{0,1}$ is symmetric.  If both are
attached, the map
\[
g\mapsto g+N,\qquad a\mapsto u+N,\qquad b\mapsto v+N
\]
onto the $m$-fold group operation on $G/N$ has kernel
$\theta_N^{1,1}$.  The block descriptions now show that inclusion corresponds
componentwise to subgroup inclusion and to $0<1$ in each flag.  This proves
the lattice assertion.

For the typed assertion, sufficiency in~\eqref{eq:generaltyped} follows because
every $\phi$-replacement is a $\theta$-replacement.  Conversely, symmetry
allows carrier replacements to occupy every coordinate, so $\theta$ is an
ordinary congruence.  Suppose $x\mathrel\phi y$.  If $x,y\in G$, fixing all
other inputs at $0$ yields $x\mathrel\theta y$.  If, say,
$x=a$ and $y=g\in G$, fixing all other inputs at $a$ compares the outputs
$a$ and $g+(m-1)u=g$, hence $a\mathrel\theta g$; the case involving $b$ is
the same.  If $a\mathrel\phi b$, the all-$a$, all-$b$, and zero contexts give
respectively $a\mathrel\theta v$, $u\mathrel\theta b$, and
$u\mathrel\theta v$, whence $a\mathrel\theta b$.  Thus every pair of $\phi$
belongs to $\theta$.
\end{proof}

\begin{theorem}[Quotient-criticality criterion]
\label{thm:generalcritical}
Put $d=u-v$.  Then $F_m(G;u,v)$ is binary reducible if and only if $d=0$.
More generally,
\begin{equation}
F_m(G;u,v)/\theta_N^{\varepsilon,\delta}
\text{ is binary reducible}
\quad\Longleftrightarrow\quad
\varepsilon=1\ \text{or}\ \delta=1\ \text{or}\ d\in N.
\label{eq:quotientcriterion}
\end{equation}
Consequently $F_m(G;u,v)$ is quotient-critically irreducible if and only if
\begin{equation}
d\ne0
\quad\text{and}\quad
d\in N\text{ for every nonzero subgroup }N\leq G.
\label{eq:monolithcriterion}
\end{equation}
If $G$ is finite, condition~\eqref{eq:monolithcriterion} holds exactly when
$G$ is a cyclic group of order $p^k$ for some prime $p$ and $k\geq1$, and
$d$ has order $p$.
\end{theorem}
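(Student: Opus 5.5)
The plan is to reduce every quotient to another two-branch or one-branch extension and then settle reducibility case by case. For the case split I will use the kernel descriptions already obtained in the proof of Theorem~\ref{thm:generalcongruences}:
\begin{itemize}[leftmargin=*]
\item $F_m(G;u,v)/\theta_N^{0,0}\cong F_m(G/N;u+N,v+N)$;
\item $F_m(G;u,v)/\theta_N^{1,0}$ is the one-branch extension of $G/N$ by $b$ with connecting value $v+N$, and $\theta_N^{0,1}$ is symmetric;
\item $F_m(G;u,v)/\theta_N^{1,1}$ is the $m$-fold group sum on $G/N$.
\end{itemize}
Since the quotient for $\theta_N^{0,0}$ is again of the form $F_m$, the first assertion with $G/N$ in place of $G$ gives \eqref{eq:quotientcriterion} in that case. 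The proof therefore splits into three steps: reducibility for one or no branch, reducibility of the two-branch operation when $d=0$, and irreducibility when $d\neq0$.

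\emph{Positive constructions.} The key device is that $(G,+)$ can be re-based at any $w\in G$ via $x\ast_w y=x+y-w$. This is a group with identity $w$. Its $m$-fold product is $\sum x_i-(m-1)w=\sum x_i$, because the exponent of $G$ divides $m-1$.
\begin{itemize}[leftmargin=*]
\item Zero branches: $\ast_0$ reduces the $m$-fold group sum.
\item One branch $b$ with value $v$: adjoin $b$ to $(G,\ast_v)$ as an external identity. An $m$-fold product in which $b$ occurs $k<m$ times equals the $\ast_v$-product of the remaining group elements. This agrees with $\sum\tau(x_i)$, since replacing each $b$ by $v=\tau(b)$ inserts the identity.
\item Two branches with $u=v$: take the strong semilattice of the groups $\{a\},\{b\},(G,\ast_v)$ over the semilattice $\{a,b,\bot\}$ with $a,b\mapsto v$. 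This is a semigroup, and by the same computation its $m$-fold product is \eqref{eq:generalF}.
\end{itemize}
These constructions give the ``if'' direction of every case: the first assertion when $d=0$, and \eqref{eq:quotientcriterion} when $\varepsilon=1$, $\delta=1$, or $d\in N$.

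\emph{Irreducibility when $d\neq0$.} This is the step I expect to be the main obstacle. My first move is to invoke the exact reducibility criterion of Devillet and Mathonet \cite{DevilletMathonet2021} for symmetric $m$-ary bands, specialised to this strong $m$-ary semilattice of groups. If a direct argument is needed, I would suppose $g$ reduces $F$ and exploit the identities
\[
F(a,\dots,a,b)=v,\qquad F(a,b,\dots,b)=u,\qquad F(x,\dots,x)=x,
\]
aiming to show three things:
\begin{itemize}[leftmargin=*]
\item $g(a,a)=a$ and $g(b,b)=b$;
\item $G$ is a $g$-ideal, on which $g$ is a re-based group law $\ast_w$;
\item $a$ and $b$ act on $G$ as translations whose $(m-1)$-fold composites are trivial.
\end{itemize}
Evaluating $g(a,b)\in G$ in two bracketings of $F(a,\dots,a,b)$ and $F(a,b,\dots,b)$ should then force $u=v$, a contradiction.

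\emph{Criticality and the group-theoretic condition.} The nonidentity congruences are exactly those with $N\neq0$ or a nonzero flag. By \eqref{eq:quotientcriterion}, every such quotient is reducible precisely when $d\in N$ for every nonzero $N\leq G$. Adding irreducibility of the source, namely $d\neq0$, gives \eqref{eq:monolithcriterion}.

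For finite $G$, condition \eqref{eq:monolithcriterion} says that $\langle d\rangle$ is the unique minimal nonzero subgroup. Every nonzero subgroup contains a subgroup of prime order, so $\langle d\rangle$ must itself have prime order $p$. Moreover, $G$ can contain no element of another prime order, and no second subgroup of order $p$. A finite abelian group whose only subgroup of prime order is a single subgroup of order $p$ is a cyclic $p$-group, by the structure theorem. Conversely, in $\mathbb Z_{p^k}$ with $k\geq1$, every nonzero subgroup contains the unique subgroup of order $p$, which is $\langle d\rangle$ when $d$ has order $p$.
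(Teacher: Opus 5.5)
Your proposal is correct and follows the paper's proof. It uses the same reduction of each quotient via the kernels from Theorem~\ref{thm:generalcongruences}, the same Devillet--Mathonet criterion for irreducibility when $d\neq0$, and the same minimal-subgroup argument for finite $G$; your re-based laws $x\ast_w y=x+y-w$ only make explicit the reductions the paper obtains by taking the surviving branch image as neutral element. For your fallback direct argument, only $g(a,a)=a$ and $g(b,b)=b$ are needed: the idempotent power $t=a^{m-1}$ must equal $a$, because $F(t,\dots,t,a)=ta=a$ while the formula gives $u$ whenever $t\ne a$, and then $v=F(a,\dots,a,b)=g(a,b)=F(a,b,\dots,b)=u$.
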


\begin{proof}
The reducibility criterion for a strong $m$-ary semilattice asks for compatible
choices of neutral elements in its group components
\cite[Proposition~5.3]{DevilletMathonet2021}.  The singleton branches force
the bottom choice to be both $u$ and $v$.  Hence the source is reducible
exactly when $u=v$.

The quotient by $\theta_N^{0,0}$ has the two-branch form
$F_m(G/N;u+N,v+N)$, so it is reducible exactly when $d\in N$.  If one flag is
one, its corresponding branch has been absorbed into the group quotient and
at most one singleton branch remains; choosing the image of that branch as
the neutral element gives a binary reduction.  If both flags are one, the
quotient is the $m$-fold operation of $G/N$ and is reducible.  This proves
\eqref{eq:quotientcriterion}.  The identity congruence is
$\theta_{\{0\}}^{0,0}$, so~\eqref{eq:monolithcriterion} follows.

Assume now that $G$ is finite and~\eqref{eq:monolithcriterion} holds.  A
subgroup of prime order contained in $\langle d\rangle$ must contain $d$;
therefore $d$ itself has prime order, say $p$.  A subgroup of order $q$ for
any prime divisor $q$ of $|G|$ must also contain $d$, forcing $q=p$; hence
$G$ is a $p$-group.  A noncyclic finite abelian $p$-group has at least two
distinct subgroups of order $p$, one of which does not contain $d$.  Thus $G$
is cyclic.  Conversely, every nonzero subgroup of a cyclic group of order
$p^k$ contains its unique subgroup of order $p$, so it contains every element
of order $p$, including $d$.
\end{proof}

\section{A four-point family in every odd arity}\label{sec:family}

Fix an odd integer $m\geq3$ and let
\[
X=\{1,2,3,4\},\qquad
\eps(1)=\eps(4)=0,\qquad \eps(2)=\eps(3)=1.
\]
Put $\eta(0)=4$ and $\eta(1)=3$.  Define $H_m:X^m\to X$ by
\begin{equation}
H_m(x_1,\ldots,x_m)=
\begin{cases}
1,&x_1=\cdots=x_m=1,\\
2,&x_1=\cdots=x_m=2,\\
\eta\bigl(\eps(x_1)+\cdots+\eps(x_m)\bigr),&\text{otherwise},
\end{cases}
\label{eq:Hm}
\end{equation}
where the sum is taken in $\mathbb Z_2$.  For $m=3$ this is the irreducible
operation in~\cite[Example~1.1(a)]{DevilletMathonet2021}; formula
\eqref{eq:Hm} realizes the same strong-semilattice datum in every odd arity.
Under the identification $a=1$, $b=2$, $0=4$, and $1=3$ in the bottom copy
of $\mathbb Z_2$, it is precisely
\[
H_m=F_m(\mathbb Z_2;0,1).
\]

\begin{lemma}[Structure and parity]\label{lem:Hstructure}
For every odd $m\geq3$, $H_m$ is a symmetric idempotent $m$-ary semigroup and
\begin{equation}
\eps(H_m(x_1,\ldots,x_m))
=\eps(x_1)+\cdots+\eps(x_m).
\label{eq:parity}
\end{equation}
It is the strong $m$-ary semilattice over the three-element meet-semilattice
$c<a,c<b$, with components
\[
X_a=\{1\},\qquad X_b=\{2\},\qquad X_c=\{3,4\}\cong\mathbb Z_2,
\]
and connecting homomorphisms $1\mapsto4$ and $2\mapsto3$ into $X_c$.
\end{lemma}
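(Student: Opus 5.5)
The plan is to reduce everything to the identification $H_m=F_m(\mathbb Z_2;0,1)$ and then invoke Theorem~\ref{thm:generalcongruences}. First I will verify that identification explicitly. Define $\iota:X\to X(\mathbb Z_2)$ by $1\mapsto a$, $2\mapsto b$, $4\mapsto 0$, $3\mapsto 1$, and set $u=0$, $v=1$. Then $\tau\circ\iota=\eps$ on $X$: we have $\tau(a)=u=0=\eps(1)$ and $\tau(b)=v=1=\eps(2)$, and on the bottom copy $\tau$ is the identity, which matches $\eps(4)=0$ and $\eps(3)=1$. Moreover $\eta$ is exactly $\iota^{-1}$ restricted to $\mathbb Z_2$. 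Comparing the three cases of \eqref{eq:Hm} with \eqref{eq:generalF} therefore gives $\iota(H_m(x_1,\ldots,x_m))=F_m(\iota x_1,\ldots,\iota x_m)$. The hypothesis of Theorem~\ref{thm:generalcongruences} also holds, since $m$ is odd and so the exponent $2$ of $\mathbb Z_2$ divides $m-1$. The first sentence of that theorem then says $F_m$ is a symmetric idempotent $m$-ary semigroup, and transporting along the bijection $\iota$ gives the same for $H_m$.

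Next I will prove the parity identity \eqref{eq:parity} by the same three cases. If all $x_i=1$, both sides equal $0$, because $m\cdot 0=0$. If all $x_i=2$, the left side is $\eps(2)=1$ and the right side is $m\cdot1=1$ in $\mathbb Z_2$, since $m$ is odd. Otherwise the output is $\eta(s)$ with $s=\sum\eps(x_i)$, and $\eps\circ\eta=\mathrm{id}_{\mathbb Z_2}$ because $\eps(4)=0$ and $\eps(3)=1$. This is the only point where oddness of $m$ enters directly, and it is easy.

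Finally I will read off the strong-semilattice description from the remark following \eqref{eq:generalF}. There $F_m(G;u,v)$ is described as the strong $m$-ary semilattice with two singleton components over the bottom group component $G$, with connecting maps $a\mapsto u$ and $b\mapsto v$. Under $\iota$ this becomes components $X_a=\{1\}$ and $X_b=\{2\}$ over $X_c=\{3,4\}\cong\mathbb Z_2$, where $4$ is the zero and the operation is the $m$-fold sum. The connecting maps become $1\mapsto\iota^{-1}(0)=4$ and $2\mapsto\iota^{-1}(1)=3$. I will also check directly that these maps are $m$-ary homomorphisms, since the $m$-fold sum of $0$ is $0$ and the $m$-fold sum of $1$ is $1$. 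I will confirm as well that the meet-semilattice structure $c<a$, $c<b$ matches the case split: constant tuples stay in their own component, and every mixed tuple falls to $X_c$.

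I expect no real obstacle. The only delicate point is the bookkeeping of the relabelling, namely that $\eta$ sends $0\mapsto4$ and $1\mapsto3$, so that $4$ rather than $3$ is the group identity. I will state $\iota$ explicitly to settle this.
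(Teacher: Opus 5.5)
Your proposal is correct and follows essentially the paper's route. The paper also checks the parity identity on the three cases of \eqref{eq:Hm}, reads off the components and connecting homomorphisms, and notes that everything is the specialization $H_m=F_m(\mathbb Z_2;0,1)$ of Theorem~\ref{thm:generalcongruences}; your explicit bijection $\iota$ simply makes that relabelling airtight. One harmless inaccuracy: oddness of $m$ is not used only in the parity step, since it also supplies the hypothesis $2\mid m-1$ and the identity $m\cdot1=1$ needed for the connecting map $2\mapsto3$.
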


\begin{proof}
Equation~\eqref{eq:parity} is immediate in the third case of
\eqref{eq:Hm}; it also holds on the two exceptional diagonals because $m$ is
odd.  Identify $4$ with $0$ and $3$ with $1$ in $\mathbb Z_2$.  On $X_c$ the
operation is the $m$-fold group sum.  The two connecting maps are homomorphisms
because $m\cdot0=0$ and $m\cdot1=1$ in $\mathbb Z_2$.  The strong-semilattice
reconstruction gives~\eqref{eq:Hm}: a mixed tuple is transported to $X_c$ and
evaluated by parity.  This is also the specialization of
Theorem~\ref{thm:generalcongruences} to $G=\mathbb Z_2$, $u=0$, and $v=1$.
\end{proof}

For compactness, a partition is written by concatenating elements in a block
and separating blocks by vertical bars.  Define
\begin{align*}
\Delta&=1|2|3|4,&
\lambda&=1|23|4,&
\mu&=14|2|3,&
\nu&=1|2|34,\\
\alpha&=134|2,&
\pi&=14|23,&
\beta&=1|234,&
\nabla&=1234.
\end{align*}
For every $\theta\ne\Delta$ in this list, order the quotient classes as in
Table~\ref{tab:quotientreductions}.  The displayed matrix is the Cayley table
of a commutative associative operation $g_\theta$; row and column labels are
$0,1$ or $0,1,2$ in the stated class order.

\begin{table}[ht]
\centering
\small
\caption{Binary reductions of the proper quotients of $H_m$}
\label{tab:quotientreductions}
\begin{tabular}{c@{\quad}c@{\quad}c}
\hline
$\theta$ & ordered quotient classes & Cayley matrix of $g_\theta$\\
\hline
$\nabla$ & $1234$ & $\begin{pmatrix}0\end{pmatrix}$\\[2mm]
$\alpha$ & $134,2$ & $\begin{pmatrix}0&0\\0&1\end{pmatrix}$\\[3mm]
$\pi$ & $14,23$ & $\begin{pmatrix}0&1\\1&0\end{pmatrix}$\\[3mm]
$\beta$ & $1,234$ & $\begin{pmatrix}0&1\\1&1\end{pmatrix}$\\[3mm]
$\lambda$ & $1,23,4$ &
$\begin{pmatrix}0&1&2\\1&2&1\\2&1&2\end{pmatrix}$\\[4mm]
$\mu$ & $14,2,3$ &
$\begin{pmatrix}2&0&0\\0&1&2\\0&2&2\end{pmatrix}$\\[4mm]
$\nu$ & $1,2,34$ &
$\begin{pmatrix}0&2&2\\2&1&2\\2&2&2\end{pmatrix}$\\[3mm]
\hline
\end{tabular}
\end{table}

\begin{lemma}[Uniform quotient reductions]\label{lem:quotientreductions}
Let $q_\theta:X\to X/\theta$ be the class map.  For every odd $m\geq3$ and
each $\theta\in\{\nabla,\alpha,\pi,\beta,\lambda,\mu,\nu\}$,
\begin{equation}
q_\theta(H_m(x_1,\ldots,x_m))
=g_\theta^{(m)}(q_\theta(x_1),\ldots,q_\theta(x_m)),
\label{eq:quotientreduction}
\end{equation}
where $g_\theta^{(m)}$ is the associative $m$-fold product.  Hence every
listed $\theta$ is a congruence and every corresponding quotient is binary
reducible.
\end{lemma}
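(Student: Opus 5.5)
The plan is to verify \eqref{eq:quotientreduction} separately for each of the seven tables, after two general reductions.

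\textbf{Associativity and commutativity.} First I check that each $g_\theta$ is commutative and associative. Each matrix is symmetric, so commutativity is immediate. For associativity I identify every table with a familiar semigroup:
\begin{itemize}[leftmargin=*]
\item $\nabla$ is trivial, and $\alpha$ is the two-element semilattice with zero $0$.
\item $\pi$ is $\mathbb Z_2$, and $\beta$ is the semilattice with zero $1$.
\item $\lambda$, after relabelling $1\mapsto$ odd and $2\mapsto$ even (class $4$ is the identity element), is $\mathbb Z_2$ with an adjoined identity $0$ for the class of $1$. Indeed row $0$ is $(0,1,2)$, and the $\{1,2\}$ block is $\mathbb Z_2$ with $2$ as zero element.
\item $\mu$ is $\mathbb Z_2=\{2,0\}$ (identity $2$, and $0\cdot0=2$) with $1$ adjoined. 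The rules $1\cdot0=0$ and $1\cdot2=2$ show that $1$ acts as an identity on that group, and $1\cdot1=1$. So it is a monoid-like strong semilattice of groups.
\item $\nu$ is the semilattice $0,1>2$ with $2$ as zero.
\end{itemize}
Each of these is associative, either as a group with an adjoined identity or as a semilattice. Associativity makes $g_\theta^{(m)}$ well defined, and commutativity makes it independent of argument order.

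\textbf{Reducing the verification.} Since $H_m$ is symmetric, both sides of \eqref{eq:quotientreduction} depend only on the multiset of inputs. I split into the three cases of \eqref{eq:Hm}.

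For the all-$1$ and all-$2$ diagonals I need $g_\theta^{(m)}(c,\dots,c)=c$ for the classes of $1$ and $2$. This holds because these classes are idempotents in every table, or have order dividing $m-1$ in $\mathbb Z_2$, as in the case $\pi$ with $m$ odd.

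For a mixed tuple, the left side is the class of $3$ or $4$ according to the parity $\sum\eps(x_i)$. I then show the right side depends only on this parity.
\begin{itemize}[leftmargin=*]
\item In $\pi$, $\lambda$ and $\mu$, the map $q_\theta$ is compatible with a homomorphism to $\mathbb Z_2\cup\{\text{adjoined identity}\}$ that tracks $\eps$. Because the tuple is mixed, at least one factor lies outside the adjoined-identity part, so the product lands in the group and records the parity.
\item In $\nabla$, $\alpha$, $\beta$ and $\nu$, the classes of $3$ and $4$ coincide. A mixed tuple contains either an element of the zero class or two distinct top idempotents whose product is zero, so the product is the zero class, which contains $3$ and $4$.
\end{itemize}

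\textbf{Consequences.} Once \eqref{eq:quotientreduction} holds, the right side depends only on the classes $q_\theta(x_i)$. Hence $\theta$ satisfies the congruence condition, and the formula exhibits $g_\theta$ as a binary reduction of $H_m/\theta$.

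The main obstacle is keeping the class labels straight in $\lambda$ and $\mu$, where the adjoined element must act as an identity and not as a zero. I would handle these two by an explicit homomorphism to $\mathbb Z_2^{1}$ rather than by case-checking.
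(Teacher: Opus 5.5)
Your plan follows the paper's proof. You identify each $g_\theta$ as a semilattice, as $C_2$, or as $C_2$ with an adjoined identity, and compare $m$-fold products with the three cases of \eqref{eq:Hm}. Your arguments for $\nabla,\alpha,\beta,\nu$ (the zero class absorbs every mixed tuple), for $\pi$, and for $\lambda$ are fine.

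The gap is the mixed case for $\mu$. In $g_\mu$ the $C_2$ block $\{0,2\}$ has the \emph{odd} class $\{3\}$ (label $2$) as its identity. The \emph{even} class $\{1,4\}$ (label $0$) is the non-identity element. So the homomorphism to $\mathbb Z_2\cup\{e\}$ that ``tracks $\eps$'' does not exist. It would send label $0$ to $\bar0$, hence $g_\mu(0,0)=2$ to $\bar0+\bar0=\bar0$, although $\eps(3)=1$.

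On a mixed tuple, the product actually records the parity of the number of entries lying in $\{1,4\}$, that is, of $m-\sum\eps(x_i)$. Matching this with $\eta(\sum\eps(x_i))$ requires $m$ to be odd. Your mixed-case argument never uses oddness, so as written it fails exactly here. The paper addresses this point explicitly: ``Because $m$ is odd, this is equivalent to the required parity rule.'' The repair is to track $x\mapsto 1+\eps(x)$ instead. This map commutes with the $m$-fold sum because $m\cdot 1=1$ in $\mathbb Z_2$.

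There is also a smaller slip in your diagonal check. The class of $2$ in $\lambda$ and the class of $1$ in $\mu$ are not idempotent, since $g_\lambda(1,1)=2$ and $g_\mu(0,0)=2$. They are the order-two elements, so $c^m=c$ there also rests on $m$ being odd, as for $\pi$. Your ``or order dividing $m-1$'' disjunction covers these cases, but the stated justification ``idempotent in every table'' does not.
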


\begin{proof}
For $\alpha$, the quotient output is the class $2$ exactly when all inputs
belong to that class; this is the two-element meet table.  The dual statement
for $\beta$ gives the join table.  For $\pi$, equation~\eqref{eq:parity} gives
addition in $\mathbb Z_2$.

For $\lambda$, $0$ is an identity and $\{1,2\}$ is a copy of $C_2$ with
identity $2$.  Thus an $m$-fold product is $0$ only on the all-$0$ tuple and,
otherwise, is $1$ or $2$ according as the number of entries $1$ is odd or
even.  These are exactly the exceptional all-$1$ case and the two parity
cases of~\eqref{eq:Hm} modulo $\lambda$.

For $\mu$, $1$ is an identity and $\{0,2\}$ is a copy of $C_2$ with identity
$2$.  Away from the all-$1$ quotient tuple, the product is $0$ or $2$
according as the number of entries $0$ is odd or even.  Because $m$ is odd,
this is equivalent to the required parity rule, while the all-$1$ tuple
records the exceptional all-$2$ input of~\eqref{eq:Hm}.  For $\nu$, the
elements $0$ and $1$ are idempotent and every product involving two different
elements is $2$; hence its $m$-fold product is $0$ on the all-$0$ tuple, $1$
on the all-$1$ tuple, and $2$ otherwise.  These descriptions also prove
associativity of the three tables.  The one-element case is immediate, and
\eqref{eq:quotientreduction} proves both claims.
\end{proof}

\subsection{The complete congruence and automorphism classification}

\begin{corollary}[Congruence lattice]\label{thm:conlattice}
For every odd $m\geq3$,
\begin{equation}
\Con(H_m)=\{\Delta,\lambda,\mu,\nu,
\alpha,\pi,\beta,\nabla\}.
\label{eq:conlist}
\end{equation}
With equivalences ordered by inclusion, this lattice is isomorphic to the
Boolean lattice $B_3$.  Its atoms are $\lambda,\mu,\nu$, and
\[
\lambda\vee\mu=\pi,
\qquad \mu\vee\nu=\alpha,
\qquad \lambda\vee\nu=\beta.
\]
\end{corollary}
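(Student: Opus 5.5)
This follows by specializing Theorem~\ref{thm:generalcongruences} to $G=\mathbb Z_2$, $u=0$, $v=1$, which is legitimate by Lemma~\ref{lem:Hstructure}: there $H_m=F_m(\mathbb Z_2;0,1)$ under the identification $a=1$, $b=2$, $0=4$, $1=3$. Since $\operatorname{Sub}(\mathbb Z_2)=\{\{0\},\mathbb Z_2\}$ is a two-element chain $B_1$, the theorem gives
\[
\Con(H_m)\cong B_1\times B_2\cong B_3 .
\]
It remains to translate the eight equivalences $\theta_N^{\varepsilon,\delta}$ into the partition notation and read off the atoms and joins.

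The translation is a finite bookkeeping step. For $N=\{0\}$ the cosets are $\{4\}$ and $\{3\}$. Setting $\varepsilon=1$ attaches $a=1$ to $u+N=\{4\}$, and setting $\delta=1$ attaches $b=2$ to $v+N=\{3\}$. Hence
\[
\theta_{0}^{0,0}=\Delta,\quad \theta_{0}^{1,0}=14|2|3=\mu,\quad \theta_{0}^{0,1}=1|23|4=\lambda,\quad \theta_{0}^{1,1}=14|23=\pi .
\]
For $N=\mathbb Z_2$ the only coset is $\{3,4\}$. Hence
\[
\theta_{\mathbb Z_2}^{0,0}=1|2|34=\nu,\quad \theta_{\mathbb Z_2}^{1,0}=134|2=\alpha,\quad \theta_{\mathbb Z_2}^{0,1}=1|234=\beta,\quad \theta_{\mathbb Z_2}^{1,1}=\nabla .
\]
This gives~\eqref{eq:conlist}. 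As a consistency check, Lemma~\ref{lem:quotientreductions} independently confirms that the seven nonidentity partitions are congruences.

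Under the isomorphism $(N,\varepsilon,\delta)\mapsto\theta_N^{\varepsilon,\delta}$, the atoms of $B_1\times B_2$ are $(\mathbb Z_2,0,0)$, $(0,1,0)$ and $(0,0,1)$, which correspond to $\nu$, $\mu$ and $\lambda$. Joins are computed componentwise:
\[
\lambda\vee\mu=(0,1,1)=\pi,\qquad \mu\vee\nu=(\mathbb Z_2,1,0)=\alpha,\qquad \lambda\vee\nu=(\mathbb Z_2,0,1)=\beta .
\]
These can also be verified directly as joins of partitions, for instance $1|23|4\vee14|2|3=14|23$.

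There is no real obstacle here. The only point requiring care is matching the orientation of the identification, namely that $u=0$ corresponds to the element $4$, so that $a=1$ joins $4$ rather than $3$.
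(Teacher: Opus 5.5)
Your proposal is correct and is essentially the paper's proof. You specialize Theorem~\ref{thm:generalcongruences} to $G=\mathbb Z_2$, $u=0$, $v=1$ (admissible because $m-1$ is even), match the zero-subgroup choices with $\Delta,\mu,\lambda,\pi$ and the whole-group choices with $\nu,\alpha,\beta,\nabla$, and read off the atoms and joins componentwise in $B_1\times B_2$; your explicit coset bookkeeping, including the orientation $u=0\leftrightarrow 4$, is accurate and just spells out what the paper states in one line.
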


\begin{proof}
The subgroup lattice of $\mathbb Z_2$ is $B_1$.  Under the four-point
identification above, the four choices attached to the zero subgroup are
$\Delta,\mu,\lambda,\pi$, and those attached to the whole group are
$\nu,\alpha,\beta,\nabla$.  Theorem~\ref{thm:generalcongruences} therefore
gives
\[
\Con(H_m)\cong B_1\times B_2\cong B_3
\]
and gives the displayed joins.
\end{proof}

\begin{corollary}[Automorphisms]\label{cor:aut}
For every odd $m\geq3$,
\[
\Aut(H_m)=\{\operatorname{id},(1\ 2)(3\ 4)\}\cong C_2.
\]
The nontrivial automorphism interchanges $\alpha$ with $\beta$ and
$\lambda$ with $\mu$, and fixes $\pi$ and $\nu$.
\end{corollary}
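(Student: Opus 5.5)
The plan is to pin down every automorphism by invariants that $H_m$ itself singles out, and then compute the induced action on the congruences listed in Corollary~\ref{thm:conlattice}.

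\emph{Step 1 (invariants).} The points $1$ and $2$ are exactly the elements $x$ for which some mixed tuple can output $x$ only through the all-$x$ diagonal. Indeed, by \eqref{eq:Hm} the outputs $1$ and $2$ occur only on the constant tuples, whereas $3$ and $4$ also occur on non-constant tuples. To see this for $4$, take the tuple $(1,4,\ldots,4)$; its output is $\eta(0)=4$. An automorphism $\sigma$ preserves the property ``$x$ is attained only by the constant tuple'', so $\sigma$ permutes $\{1,2\}$ and permutes $\{3,4\}$.

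\emph{Step 2 (coupling).} The connecting data must be preserved. We have $H_m(1,4,\ldots,4)=4$ and $H_m(2,3,\ldots,3)=3$, while $H_m(1,3,\ldots,3)=\eta(m-1)=\eta(0)=4\neq3$, since $m-1$ is even. Equivalently, $4$ is the element of $X_c$ absorbing $1$, namely $H_m(1,x,\ldots,x)=x$ exactly for $x=4$ in $X_c$. Hence $\sigma(1)=1$ forces $\sigma(4)=4$ and so $\sigma(3)=3$. Likewise $\sigma(1)=2$ forces $\sigma(4)=3$. The only candidates are therefore $\operatorname{id}$ and $(1\ 2)(3\ 4)$.

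\emph{Step 3 (verification).} We check that $\sigma=(1\ 2)(3\ 4)$ is an automorphism. It swaps the two exceptional diagonals. On all other tuples it complements $\eps$, because $\eps(1)=0,\ \eps(2)=1$ and $\eps(4)=0,\ \eps(3)=1$. Complementing each of the $m$ summands flips the parity sum, since $m$ is odd. Finally, $\sigma\circ\eta$ is exactly $\eta$ of the complemented parity, as $\sigma$ swaps $4=\eta(0)$ and $3=\eta(1)$. So $\sigma(H_m(x))=H_m(\sigma x)$, and $\Aut(H_m)\cong C_2$.

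\emph{Step 4 (action on congruences).} Apply $\sigma$ blockwise to the partitions:
\[
\alpha=134|2\mapsto 243|1=\beta,\quad
\lambda=1|23|4\mapsto 2|14|3=\mu,\quad
\pi=14|23\mapsto 23|14=\pi,\quad
\nu=1|2|34\mapsto\nu .
\]
The only step needing care is Step~2, where the arity parity ($m-1$ even) is what distinguishes $3$ from $4$ relative to $1$. Everything else is mechanical.
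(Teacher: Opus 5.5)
Your proof is correct and follows the same outline as the paper's. You show that every automorphism preserves $\{1,2\}$ and $\{3,4\}$, use the connecting data (the values $H_m(1,x,\ldots,x)$ and $H_m(2,x,\ldots,x)$ for $x\in\{3,4\}$, i.e.\ the maps $1\mapsto 4$ and $2\mapsto 3$) to tie the two permutations together, check $(1\ 2)(3\ 4)$, and transport the partitions. The one difference is how you get the invariance of $\{1,2\}$: you justify it directly, since $1$ and $2$ are the only elements whose sole preimage is the constant tuple, whereas the paper appeals to the canonicity of the strong-semilattice decomposition in Lemma~\ref{lem:Hstructure}. The opening sentence of your Step~1 is garbled, but the invariant you actually use is correct and automorphism-invariant, and it makes your argument slightly more self-contained.
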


\begin{proof}
The strong-semilattice decomposition in Lemma~\ref{lem:Hstructure} has a
unique nonsingleton group component $\{3,4\}$, so every automorphism preserves
that component and permutes the singleton components $\{1\},\{2\}$.  If it
fixes the singleton components, compatibility with the two connecting maps
fixes $4$ and $3$; it is the identity.  If it interchanges them, compatibility
forces $3\leftrightarrow4$, giving $(1\ 2)(3\ 4)$.  Formula~\eqref{eq:Hm}
shows that this permutation is indeed an automorphism.  Its action on
\eqref{eq:conlist} is immediate.
\end{proof}

\subsection{All typed congruence pairs}

Regard $H_m$ as an alternating two-sorted operation with carrier entries in
the odd positions and index entries in the even positions; both sorts have
underlying set $X$.  A typed congruence is a pair $(\theta,\phi)$ satisfying
the alternating replacement condition used in
Theorem~\ref{thm:generalcongruences}.

\begin{corollary}[The $36$-pair classification]\label{thm:36pairs}
For every odd $m\geq3$, a pair $(\theta,\phi)$ is a typed congruence of the
alternating core $H_m$ if and only if
\begin{equation}
\theta\in\Con(H_m)
\qquad\text{and}\qquad
\phi\subseteq\theta.
\label{eq:typedclassification}
\end{equation}
Consequently the core has exactly $36$ typed congruence pairs and exactly
eight diagonal congruences.
\end{corollary}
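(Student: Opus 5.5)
The characterization \eqref{eq:typedclassification} is the specialization of the typed assertion \eqref{eq:generaltyped} of Theorem~\ref{thm:generalcongruences} to $G=\mathbb Z_2$, $u=0$, $v=1$. Under the identification $a=1$, $b=2$, $0=4$, $1=3$ we have $H_m=F_m(\mathbb Z_2;0,1)$, as noted after \eqref{eq:Hm}. This identification is a bijection of $X$ that transports the alternating two-sorted operation and its typed replacement condition. So a pair $(\theta,\phi)$ on $X$ is a typed congruence of $H_m$ exactly when its image is a typed congruence of $F_m(\mathbb Z_2;0,1)$. By the theorem, this holds exactly when $\theta$ is an ordinary congruence and $\phi\subseteq\theta$. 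Corollary~\ref{thm:conlattice} then identifies the admissible $\theta$ with the eight partitions in \eqref{eq:conlist}.

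The count reduces to enumerating, for each $\theta\in\Con(H_m)$, the equivalence relations $\phi$ on $X$ with $\phi\subseteq\theta$. These are the partitions of $X$ refining $\theta$, so their number is the product of the Bell numbers of the block sizes of $\theta$. With $B_1=1$, $B_2=2$, $B_3=5$, $B_4=15$, the eight congruences contribute as follows:
\[
\Delta:1,\quad \lambda:2,\quad \mu:2,\quad \nu:2,\quad
\alpha:5,\quad \pi:2\cdot2=4,\quad \beta:5,\quad \nabla:15.
\]
These sum to $1+2+2+2+5+4+5+15=36$. For the diagonal count, $\theta=\phi$ always satisfies \eqref{eq:typedclassification} when $\theta\in\Con(H_m)$. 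Hence the diagonal typed congruences are exactly the eight elements of \eqref{eq:conlist}.

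The main point to check is that the transport through the identification respects the two sorts. It does, because the relabelling is applied identically to carrier and index copies of $X$. Everything else is a direct appeal to earlier results plus a finite count. If one prefers not to rely on the general theorem, one could redo its necessity argument directly. Symmetry of $H_m$ lets carrier replacements fill all positions, so $\theta$ is an ordinary congruence. Index replacements in the contexts all-$1$, all-$2$, and all-$4$ force each $\phi$-pair into $\theta$: for instance, $1\mathrel\phi 3$ in the all-$1$ context compares $1$ with $\eta(1)=3$.
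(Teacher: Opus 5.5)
Your proposal is correct and follows the paper's proof. Both specialize the typed assertion of Theorem~\ref{thm:generalcongruences} through $H_m=F_m(\mathbb Z_2;0,1)$, then sum the refinement counts $15,5,4,5,2,2,2,1$ to get $36$, with the pairs $(\theta,\theta)$ giving the eight diagonal congruences; your Bell-number bookkeeping and the direct necessity check via the all-$1$, all-$2$, and all-$4$ index contexts are correct but not needed.
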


\begin{proof}
The classification is the specialization of
Theorem~\ref{thm:generalcongruences}.  For the count, the numbers of
equivalences refining
$\nabla,\alpha,\pi,\beta,\lambda,\mu,\nu,\Delta$ are respectively
\[
15,5,4,5,2,2,2,1.
\]
Their sum is $36$.  Diagonal pairs are $(\theta,\theta)$, one for each of the
eight carrier congruences.
\end{proof}

\subsection{Quotient-critical irreducibility}

\begin{theorem}[Quotient spectrum]\label{thm:quotientspectrum}
For every odd $m\geq3$, $H_m$ is quotient-critically irreducible.  Its seven
proper quotients form five isomorphism types:
\[
H_m/\nabla,\quad H_m/\alpha\cong H_m/\beta,
\quad H_m/\pi,\quad H_m/\lambda\cong H_m/\mu,
\quad H_m/\nu.
\]
For $m=5$, the numbers of associative binary reductions of these seven
labelled quotients are
\begin{center}
\begin{tabular}{c|ccccccc}
$\theta$&$\nabla$&$\alpha$&$\pi$&$\beta$&$\lambda$&$\mu$&$\nu$\\ \hline
number&$1$&$1$&$2$&$1$&$1$&$1$&$1$.
\end{tabular}
\end{center}
The two reductions of the parity quotient are $x+y$ and $x+y+1$ on
$\mathbb Z_2$; the other reductions are the tables in
Table~\ref{tab:quotientreductions}.
\end{theorem}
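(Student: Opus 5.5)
Quotient-critical irreducibility follows from Theorem~\ref{thm:generalcritical} applied to $G=\mathbb Z_2$, $u=0$, $v=1$. Here $G$ is cyclic of order $2$ and $d=u-v=1$ has order $2$. Alternatively, irreducibility is the case $d\neq0$ of that theorem, and every proper quotient is reducible by Lemma~\ref{lem:quotientreductions}.

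For the isomorphism types, I will first use the automorphism $\sigma=(1\ 2)(3\ 4)$ of Corollary~\ref{cor:aut}. It carries $\alpha$ to $\beta$ and $\lambda$ to $\mu$, and hence induces isomorphisms $H_m/\alpha\cong H_m/\beta$ and $H_m/\lambda\cong H_m/\mu$. To see that the remaining classes are distinct, I will compare invariants. Cardinality separates $\nabla$ (one class) from the two-class quotients $\alpha,\pi$ and from the three-class quotients $\lambda,\nu$. Among the two-class quotients, $H_m/\alpha$ is a semilattice-type operation: by Lemma~\ref{lem:quotientreductions} it is the $m$-fold meet, which is idempotent. $H_m/\pi$ is the $m$-fold $\mathbb Z_2$ sum, which is also idempotent, since $m$ is odd. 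The distinguishing feature is that in $H_m/\pi$ every unary translation $x\mapsto F(x,c,\ldots,c)$ is a bijection, while in $H_m/\alpha$ the translation by the absorbing class is constant. Among the three-class quotients, $H_m/\nu$ has two elements that are not group-like, whereas $H_m/\lambda$ contains a two-element group component $\{1,2\}$. Concretely, $H_m/\lambda$ has a translation $x\mapsto F(x,1,1,\ldots)$ permuting $\{1,2\}$, while in $H_m/\nu$ every translation by non-identical entries is constant at $2$. Either invariant rules out an isomorphism.

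For the counts at $m=5$, I will classify all associative $g$ on each quotient with $g^{(5)}$ equal to the quotient operation, one table at a time. The general tool is the following. Since every quotient operation $F$ is idempotent, $g^{(5)}(x,\ldots,x)=x$, so each $g$-power $x^5=x$. Moreover, $F(x,y,y,y,y)$ and its permutations determine strong constraints, and $F(e,\ldots,e,x)$ for a candidate neutral element pins down $g$. For the one-element quotient the count is trivially $1$. For $\mathbb Z_2$, a brute-force check of the $16$ binary tables, or an argument that any associative $g$ whose $5$-fold product is the sum must be a group operation on two points, gives exactly $x+y$ and $x+y+c$ for $c\in\{0,1\}$, hence $2$. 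For $\alpha$ and $\beta$ (the two-element meet and join), a $2$-point check gives uniqueness, and the $\beta$ count equals the $\alpha$ count by $\sigma$. Likewise the $\mu$ count equals the $\lambda$ count by $\sigma$.

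The main obstacle is uniqueness for the three-element quotients $\lambda$ and $\nu$. For these I plan to show the following. First, $g$ must be commutative, because $F$ is symmetric and $g(x,y)$ can be recovered from $F$ using idempotent padding: for instance $g(x,y)$ is determined as $g^{(5)}(x,y,y,y,y)$ once $y^4$ is shown to act neutrally, or via $g(x,y)=F(x,y,\ldots)$ in suitable configurations. Second, the values of $g$ are forced entry by entry from the values of $F$ on tuples of the forms $(x,y,y,y,y)$ and $(x,x,y,y,y)$, combined with the identity $x^5=x$. If these hand deductions become tangled, the fallback is the finite search over the $3^9$ tables, restricted to associative ones, which the statement's exact counts suggest is how the numbers were obtained.
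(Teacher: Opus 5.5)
Your proposal is correct and follows the paper's proof in outline. Quotient-critical irreducibility comes from Theorem~\ref{thm:generalcritical} with $G=\mathbb Z_2$ and $d=1$, or equivalently from $d\neq0$ plus Lemma~\ref{lem:quotientreductions}. The isomorphisms $H_m/\alpha\cong H_m/\beta$ and $H_m/\lambda\cong H_m/\mu$ come from $(1\ 2)(3\ 4)$. The arity-five counts are settled by finite exhaustion, and your fallback search is exactly the paper's test of the $113$ associative tables on three points. The real difference is how you separate the types. The paper counts output fibres: $1,2^m-1$ against $2^{m-1},2^{m-1}$, and $1,(3^m-1)/2,(3^m-1)/2$ against $1,1,3^m-2$. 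You use structural invariants: bijective translations, and a two-element subalgebra on which $F$ is the $m$-ary $\mathbb Z_2$-sum. Both are valid. The paper's is a pure count, while yours shows where the difference comes from. You also settle the one- and two-point counts by hand and transfer the $\beta$ and $\mu$ counts along the automorphism, where the paper simply searches.

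Two details need repair.

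\emph{The translation example.} Your concrete translation comparison for $\lambda$ versus $\nu$ does not distinguish anything. The map $x\mapsto F(x,1,\ldots,1)$ is the same map $0\mapsto2$, $1\mapsto1$, $2\mapsto2$ in both $H_m/\lambda$ and $H_m/\nu$. Your $\nu$-property concerns non-identical parameters, so test $\lambda$ with non-identical parameters too. For example, $x\mapsto F(x,1,0,\ldots,0)$ on $H_m/\lambda$ is $0\mapsto1$, $1\mapsto2$, $2\mapsto1$, which is non-constant. Alternatively, rely only on the group-subalgebra invariant.

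\emph{The padding formula.} $g^{(5)}(x,y,y,y,y)=xy^4$, not $xy$; you must pad by an identity or use idempotence. The hand derivation then goes through cleanly.
\begin{itemize}
\item[$\lambda$:] From $F(0,0,0,0,x)=x=F(x,0,0,0,0)$, the element $0^4$ is a two-sided identity. It must equal $0$, because otherwise $F(0,1,1,1,1)$ or $F(0,2,2,2,2)$ would equal $0$ instead of $2$. Hence $g(x,y)=F(x,y,0,0,0)$, which forces $g_\lambda$.
\item[$\nu$:] From $F(2,2,2,2,x)=2=F(x,2,2,2,2)$, the idempotent $2^4$ equals $2$, and $2$ is a zero. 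If $0^2=1$, then $F(0,0,0,0,1)=0^6=0^2=1\neq2$. If $0^2=2$, then $0=0^5=2$. So $0$ is idempotent, and symmetrically so is $1$. Hence $g(x,y)=F(x,x,x,x,y)$, which forces $g_\nu$.
\end{itemize}
This makes all seven counts computer-free, which the paper's proof does not attempt.
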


\begin{proof}
Here $H_m=F_m(\mathbb Z_2;0,1)$, and the difference of the two attachment
values is the nonzero element of the cyclic group of order two.
Theorem~\ref{thm:generalcritical} therefore proves quotient-critical
irreducibility.  The reductions in Lemma~\ref{lem:quotientreductions} identify
an explicit reduction of every proper quotient.

Corollary~\ref{cor:aut} supplies the two displayed quotient isomorphisms.
The two two-element types are not isomorphic: the output-fibre sizes of
$H_m/\alpha$ are $1$ and $2^m-1$, whereas both output fibres of $H_m/\pi$
have size $2^{m-1}$.  For the three-element quotients, the output-fibre sizes
of $H_m/\lambda$ are
\[
1,\quad (3^m-1)/2,\quad (3^m-1)/2,
\]
whereas those of $H_m/\nu$ are $1,1,3^m-2$.  Hence those two types are not
isomorphic.  This proves that the five displayed types are distinct.

For the exact arity-five counts, enumerate all binary tables on the quotient
set, retain the associative ones, and test their fivefold products against
$H_5/\theta$.  There are $1$, $8$, and $113$ associative labelled operations
on sets of orders $1$, $2$, and $3$, respectively.  Substitution gives the
counts displayed above.  The supplementary verification performs this finite
exhaustion and verifies the counts; the displayed reductions identify the
surviving tables, including both reductions of the parity quotient.
\end{proof}

\section{The complete ternary \texorpdfstring{$\Gamma$}{Gamma}-semiring lift}
\label{sec:lift}

Specialize to $m=5$ and regard $(X,X,H_5)$ as a diagonal alternating quinary
core.  Its powerset complex is
\[
\T_5=(\Pow(X),\cup,\varnothing;
      \Pow(X),\cup,\varnothing;H_5^{\#}),
\]
where
\[
H_5^{\#}[A,B,C,D,E]
=\{H_5(a,b,c,d,e):a\in A,b\in B,c\in C,d\in D,e\in E\}.
\]

\begin{corollary}[Atomic quotient spectrum]\label{thm:liftedspectrum}
The algebra $\T_5$ is an atom-total, completely additive complete atomic
Boolean ternary $\Gamma$-semiring.  It has exactly $36$ strong
atom-saturated congruence pairs.  Its eight diagonal strong atom-saturated
congruences form $B_3$.  The algebra has no atomic binary collapse, but every
quotient by a proper diagonal strong atom-saturated congruence has one.  Up to
isomorphism, these proper diagonal quotients have the five types listed in
Theorem~\ref{thm:quotientspectrum}.
\end{corollary}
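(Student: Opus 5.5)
The plan is to transfer everything from the finite core $(X,X,H_5)$ to $\T_5$ through the bridge results of Sections~\ref{sec:coord}--\ref{sec:binary}, so that no new computation is needed beyond what Section~\ref{sec:family} already supplies.

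First, structure. Since $H_5$ is a functional discrete frame whose three typed associativity identities hold (Lemma~\ref{lem:Hstructure} gives $5$-ary associativity, and the alternating reading is literally the same operation), Proposition~\ref{prop:coord} shows that $\T_5=\Cpx(H_5)$ is a completely additive ternary $\Gamma$-semiring. Both reducts are $\Pow(X)$, hence complete atomic Boolean. Atom-totality is immediate because $H_5^{\#}$ sends five singletons to the singleton of the value, which is the functional case of Corollary~\ref{cor:atomiccoord}.

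Second, the congruence count. By definition, a strong atom-saturated congruence pair has the form $(\equiv_\theta,\equiv_\phi)$. I would argue that this pair is compatible with $H_5^{\#}$ exactly when $(\theta,\phi)$ is a strong frame congruence of the core. For the forward direction, apply compatibility to singletons: $\{x\}\equiv_\theta\{x'\}$ iff $x\mathrel\theta x'$, and the images are singletons, so~\eqref{eq:typedcong} results. The converse is Proposition~\ref{prop:firstiso}. The assignment $(\theta,\phi)\mapsto(\equiv_\theta,\equiv_\phi)$ is injective, again by looking at singletons. Hence the count equals the number of typed congruences of the alternating core, which is $36$ by Corollary~\ref{thm:36pairs}. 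Diagonal ones correspond to $(\theta,\theta)$ with $\theta\in\Con(H_5)$, and inclusion is preserved and reflected by $\theta\mapsto\equiv_\theta$. So Corollary~\ref{thm:conlattice} yields $B_3$.

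Third, collapses. By Lemma~\ref{lem:collapse}, atomic binary collapses of $\T_5$ correspond to binary reductions of $H_5$. By Theorem~\ref{thm:quotientspectrum} (with $d\neq0$ in Theorem~\ref{thm:generalcritical}) there are none. For a proper diagonal $\theta$, Proposition~\ref{prop:firstiso} gives $\T_5/(\equiv_\theta,\equiv_\theta)\cong\Cpx(H_5/\theta)$. Lemma~\ref{lem:quotientreductions} provides a reduction $g_\theta$ of $H_5/\theta$, and Lemma~\ref{lem:collapse} lifts it to a collapse of the quotient complex.

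Finally, the isomorphism types. An isomorphism $H_5/\theta\cong H_5/\theta'$ of cores induces an isomorphism of complexes, so there are at most five types. For distinctness, I would show that isomorphic complexes of this kind have isomorphic cores. An isomorphism of powerset algebras preserves atoms, and atom-totality makes the product of atoms determine the core operation, so it restricts to a core isomorphism. The five types of Theorem~\ref{thm:quotientspectrum} therefore stay distinct.

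The main obstacle I expect is the second step: pinning down precisely that compatibility of $(\equiv_\theta,\equiv_\phi)$ with the complex product is equivalent to the frame condition, rather than just implied by it. This is where I would be careful to use singleton inputs explicitly.
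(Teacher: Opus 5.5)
Your proposal is correct and follows the paper's route: Proposition~\ref{prop:coord} and Corollary~\ref{cor:atomiccoord} for the structure, Proposition~\ref{prop:firstiso} to transfer Corollaries~\ref{thm:36pairs} and~\ref{thm:conlattice}, and Lemma~\ref{lem:collapse} with Theorem~\ref{thm:quotientspectrum} and Lemma~\ref{lem:quotientreductions} for the collapses. You also supply details the paper leaves implicit, namely the singleton argument for the converse and for injectivity, and the atom-preservation argument for the isomorphism types.

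One small repair in that last step. An isomorphism of the complexes restricts only to a two-sorted isomorphism $(f,g)$ of the alternating cores, possibly with $f\ne g$; for example, the parity quotient admits $(\operatorname{id},\,x\mapsto x+1)$. So for distinctness, do not appeal to the one-sorted classification itself. Instead, use the cardinality and output-fibre-size invariants from the proof of Theorem~\ref{thm:quotientspectrum}, which such pairs also preserve.
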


\begin{proof}
Lemma~\ref{lem:Hstructure} and Corollary~\ref{cor:atomiccoord} give complete
additivity, the Boolean reducts, and atom-totality.  Proposition
\ref{prop:firstiso} transfers the typed congruence classification of
Corollary~\ref{thm:36pairs} to atom-saturated complete quotients.  Lemma
\ref{lem:collapse} translates the irreducibility and quotient-reducibility
statements of Theorem~\ref{thm:quotientspectrum} into the assertions about
atomic binary collapses.
\end{proof}

This corollary identifies the point at which reducibility changes: it is absent
in the original complete algebra but present in every proper diagonal strong
atom-saturated quotient.  In particular, these reductions arise in the
quotients rather than by descent of a binary operation on the source.

\subsection{Polynomial detection of the parity quotient}

Put
\[
E=\{1,4\},\qquad O=\{2,3\},\qquad e=\{4\},
\]
and define a binary relation on $\Pow(X)$ by
\begin{equation}
\mathcal R=\{(A,B):H_5^{\#}[A,e,B,e,e]\subseteq e\}.
\label{eq:polyrelation}
\end{equation}

\begin{proposition}[One-inequation recovery]\label{thm:polyrecovery}
The relation~\eqref{eq:polyrelation} is
\[
\begin{split}
\mathcal R={}&(\{\varnothing\}\times\Pow(X))
\cup(\Pow(X)\times\{\varnothing\})\\
&\cup\{(A,B):\varnothing\ne A,B\subseteq E\}
\cup\{(A,B):\varnothing\ne A,B\subseteq O\}.
\end{split}
\]
On singleton atoms it is exactly the parity congruence:
\[
(\{x\},\{y\})\in\mathcal R
\quad\Longleftrightarrow\quad x\mathrel\pi y.
\]
The map $\eps:X\to\mathbb Z_2$ induces the corresponding strong quotient,
whose five-ary operation is addition in $\mathbb Z_2$.  The relation
$\mathcal R$ is nonrectangular and cannot be defined by a conjunction of
conditions involving only one of the two variables.
\end{proposition}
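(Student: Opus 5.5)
First compute $H_5^{\#}$ on the relevant atoms. Since $H_5^{\#}$ is defined elementwise, $H_5^{\#}[A,e,B,e,e]\subseteq e$ holds exactly when $H_5(a,4,b,4,4)=4$ for every $a\in A$ and $b\in B$. If $A$ or $B$ is empty, the complex product is empty and the inclusion is vacuous. This gives the first two pieces of the displayed union.

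For nonempty $A,B$ the condition is pointwise, so it suffices to compute $H_5(a,4,b,4,4)$ for $a,b\in X$. The tuple $(a,4,b,4,4)$ is never constant $1$ or constant $2$, since it contains the entry $4$. Hence the third case of~\eqref{eq:Hm} applies, and the output is $\eta(\eps(a)+\eps(b))$ because $\eps(4)=0$. The output therefore equals $4$ exactly when $\eps(a)=\eps(b)$. So $(A,B)\in\mathcal R$ with $A,B\ne\varnothing$ if and only if every $a\in A$ and every $b\in B$ have the same parity. That forces all of $A\cup B$ to lie in a single parity class, namely $A,B\subseteq E$ or $A,B\subseteq O$. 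This yields the displayed description.

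Specializing to singletons gives $(\{x\},\{y\})\in\mathcal R$ if and only if $\eps(x)=\eps(y)$, which is $x\mathrel\pi y$ with $\pi=14|23$. Next, $\eps:X\to\mathbb Z_2$ is a surjective strong morphism onto the five-ary addition of $\mathbb Z_2$: equation~\eqref{eq:parity} shows it is a homomorphism for the functional core. Proposition~\ref{prop:firstiso} then identifies $H_5/\pi$ with that addition, consistent with Lemma~\ref{lem:quotientreductions}.

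The remaining step, which needs a small argument, is nonrectangularity. Suppose $\mathcal R=P\times Q$, or more generally $\mathcal R=\{(A,B):A\in P,\ B\in Q\}$ for some $P,Q\subseteq\Pow(X)$. Both $(\{1\},\{4\})$ and $(\{2\},\{3\})$ lie in $\mathcal R$. Rectangularity would then force $(\{1\},\{3\})\in\mathcal R$, but $H_5(1,4,3,4,4)=3\notin e$. Any conjunction of a condition on $A$ alone with a condition on $B$ alone defines such a product set, so no such definition exists. I expect no real obstacle. The only point needing care is checking that the exceptional diagonal cases of~\eqref{eq:Hm} can never occur once the index entries are fixed at $4$.
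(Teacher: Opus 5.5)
Your proposal is correct and follows essentially the same route as the paper. It reduces the complex inclusion elementwise to $H_5(a,4,b,4,4)=\eta(\eps(a)+\eps(b))$, treats the empty cases as vacuous, applies the parity homomorphism $\eps$ with Proposition~\ref{prop:firstiso}, and exhibits a rectangle violation. The only difference is the witness triple, $(\{1\},\{4\}),(\{2\},\{3\})$ versus $(\{1\},\{3\})$ rather than the paper's $(\{1\},\{1\}),(\{2\},\{2\})$ versus $(\{1\},\{2\})$, which is immaterial.
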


\begin{proof}
For $a,b\in X$, the tuple $(a,4,b,4,4)$ is not one of the two exceptional
diagonals, and
\[
H_5(a,4,b,4,4)=\eta(\eps(a)+\eps(b)).
\]
It equals $4$ exactly when $a$ and $b$ have equal parity.  The setwise
inclusion is vacuous if one input is empty.  Otherwise every cross-pair must
have equal parity, which occurs exactly when both sets are contained in $E$
or both in $O$.  Equation~\eqref{eq:parity} proves that $\eps$ is a
surjective homomorphism, so Proposition~\ref{prop:firstiso} identifies its
kernel quotient with the five-ary group operation on $\mathbb Z_2$.

Finally, $(\{1\},\{1\})$ and $(\{2\},\{2\})$ belong to $\mathcal R$ but
$(\{1\},\{2\})$ does not.  A conjunction of separate unary conditions defines
a rectangle, proving the last statement.
\end{proof}

The parity congruence is distinguished not only by its group quotient but
also by its shallow polynomial visibility.  To make that statement exact,
call a relation on $X^2$ a \emph{depth-one atomic trace} if it has the form
\begin{equation}
\{(x,y):H_5(u_1,a,u_2,b,u_3)\in C\},
\label{eq:depthone}
\end{equation}
where $a,b,u_i\in X$, the three carrier entries are a permutation of
$x,y,c$ for one fixed $c\in X$, and $C\subseteq X$.

\begin{proposition}[Depth-one separation spectrum]\label{prop:depthone}
Among equivalence relations on $X$, the only depth-one atomic traces are the
universal relation $\nabla$ and the parity relation $\pi$.  Thus $\pi$ is the
unique nontrivial proper congruence of $H_5$ recoverable by a single
depth-one inclusion of the form~\eqref{eq:depthone}.
\end{proposition}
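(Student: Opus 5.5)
The plan is to reduce every depth-one atomic trace to a two-variable function and then enumerate the few possible cases. Since $H_5$ is symmetric (Lemma~\ref{lem:Hstructure}), the order of the carrier entries $u_1,u_2,u_3$ among $x,y,c$ does not matter. So every trace~\eqref{eq:depthone} has the form
\[
T_{a,b,c,C}=\{(x,y):h(x,y)\in C\},\qquad h(x,y)=H_5(x,y,c,a,b),
\]
with $a,b,c\in X$ and $C\subseteq X$. I would classify the sets $T_{a,b,c,C}$ that are equivalence relations by reading $h$ off from~\eqref{eq:Hm}. Reflexivity alone imposes strong constraints: it requires $h(x,x)\in C$ for every $x$.

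\emph{Generic case:} $\{a,b,c\}$ is not a single element of $\{1,2\}$. Then no tuple $(x,y,c,a,b)$ is an exceptional diagonal, except the all-$3$ and all-$4$ tuples. Those are harmless, because $3$ and $4$ already satisfy the parity formula by idempotence. Hence
\[
h(x,y)=\eta\bigl(\eps(x)+\eps(y)+s\bigr),\qquad s=\eps(a)+\eps(b)+\eps(c),
\]
and $h$ takes values only in $\{3,4\}$. The value of $h$ on the same-parity pairs is the constant $\eta(s)$, and reflexivity forces $\eta(s)\in C$. If $\eta(s+1)\in C$ as well, the trace is $\nabla$; otherwise it is exactly the equal-parity relation $\pi$.

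\emph{Exceptional case:} $a=b=c\in\{1,2\}$. Take $a=b=c=1$ first. Then $h(1,1)=1$, and otherwise $h(x,y)=\eta(\eps(x)+\eps(y))$ because $\eps(1)=0$. Reflexivity forces $1\in C$ and $4=h(2,2)\in C$. Consequently every equal-parity pair lies in the trace: the pair $(1,1)$ via $1\in C$, and all the others via $4\in C$. The unequal-parity pairs all have value $3$. Hence the trace is $\nabla$ if $3\in C$, and $\pi$ if $3\notin C$. The case $a=b=c=2$ is identical after the shift $s=1$: reflexivity forces $2\in C$ and $3=h(1,1)\in C$, and the remaining freedom is whether $4\in C$. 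Equivalently, this case is the image of the previous one under the automorphism $(1\ 2)(3\ 4)$ of Corollary~\ref{cor:aut}, which fixes $\pi$ and $\nabla$.

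Both relations actually occur. Taking $C=X$ gives $\nabla$. Taking $a=b=c=4$ and $C=\{4\}$ gives $\pi$; this is the singleton restriction already computed in Proposition~\ref{thm:polyrecovery}. It follows that the only equivalence relations arising as depth-one traces are $\nabla$ and $\pi$. Comparing with the list~\eqref{eq:conlist}, $\pi$ is the unique congruence other than $\Delta$ and $\nabla$ that appears.

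The main obstacle I expect is the bookkeeping in the exceptional case. I must make sure that the all-equal tuples with value $1$ or $2$ cannot carve out a finer relation. Such a finer relation would be, for example, $\Delta$, or $\mu=14|2|3$, which would need the pair $(1,4)$ without $(2,3)$. The point to check is that reflexivity at a second element of the same parity forces the generic same-parity value into $C$. That value then captures every remaining same-parity pair, so no finer relation survives.
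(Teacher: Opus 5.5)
Your proof is correct, and it takes a different route from the paper's.

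The paper argues by exhaustive enumeration. It runs over all six placements of $x,y,c$, all $4^3$ choices of $(a,b,c)$ and all $16$ sets $C$, and reports (via the supplementary program) that these $6144$ expressions yield $12$ distinct traces. It then observes that only $\nabla$ and $\pi$ among them are reflexive. The split between the exceptional diagonals and the parity clause appears only as a one-line justification of why the list collapses to $12$.

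You instead do three things by hand:
\begin{itemize}
\item You use the symmetry of $H_5$ (Lemma~\ref{lem:Hstructure}) to dispose of the placements at once.
\item You impose reflexivity before computing any trace. This fixes every value of $h$ that matters except one bit, namely whether $\eta(s+1)$, respectively $3$ or $4$, lies in $C$.
\item Your two-case split (generic versus $a=b=c\in\{1,2\}$) then finishes the argument.
\end{itemize}

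What your approach buys is a self-contained argument checkable without code. What the enumeration buys is more than the statement needs: the full list of traces, reflexive or not. Your case split also reproduces that list if you drop the reflexivity filter. The generic case gives $\varnothing$, $\pi$, the complement of $\pi$, and $\nabla$. Each of the two exceptional choices adds four further traces, each differing from one of these by the single pair $(1,1)$ or $(2,2)$. That makes $12$ in total, matching the paper's count.

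One cosmetic point: the all-$3$ and all-$4$ tuples are not exceptional diagonals in \eqref{eq:Hm}, which singles out only the all-$1$ and all-$2$ tuples. The ``except'' clause in your generic case can simply be deleted.
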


\begin{proof}
There are six placements of $x,y,c$, $4^3$ choices of $a,b,c$, and $16$
choices of $C$.  Substitution in~\eqref{eq:Hm} reduces these $6144$
expressions to $12$ distinct traces.  Only two of the $12$ are reflexive:
the $16$-pair universal relation and the eight-pair relation
\[
\{11,14,41,44,22,23,32,33\},
\]
where $ij$ denotes $(i,j)$.  The latter is $\pi$.  The supplementary program
performs the complete enumeration; the reduction to the $12$ traces follows
directly by separating the two exceptional diagonals from the parity clause
of~\eqref{eq:Hm}.
\end{proof}

\section{Conclusion}

The two-branch construction produces a tractable class of symmetric polyadic
semigroups whose congruences retain both group and branch information.  For
every admissible $G,u,v$, its congruence lattice is
$\operatorname{Sub}(G)\times B_2$, and its typed congruence pairs are exactly
the refinements $(\theta,\phi)$ with $\phi\subseteq\theta$.  The
quotient-by-quotient reducibility criterion shows, in the finite case, that
quotient-critical irreducibility occurs precisely for cyclic $p$-groups with
$u-v$ of order $p$.

The four-point family is the first specialization of this criterion.  Its
ordinary congruence lattice is $B_3$, its alternating form has $36$ typed
congruence pairs, and its automorphism group is $C_2$.  Its seven proper
quotients are reducible and fall into five isomorphism types, with exact
reduction counts determined in arity five.

For arity five these facts lift to a $16$-element complete atomic ternary
$\Gamma$-semiring.  Its source algebra admits no atomic binary collapse, but
every proper diagonal strong atom-saturated quotient does.  The parity
quotient is additionally detected by one depth-one polynomial inequation and
is the only nontrivial proper congruence represented by a trace of the form
\eqref{eq:depthone}.  Thus the group-theoretic quotient criterion, the
two-sorted congruence classification, and the complete-algebra lift describe
the same transition from irreducibility to reducibility at three levels.

Three concrete problems remain: classify typed congruence pairs for other
finite semilattice diagrams of polyadic groups; determine the quotient
isomorphism and reduction spectra throughout the cyclic $p$-group families;
and compute the least polynomial depth needed to separate each of
$\lambda,\mu,\nu,\alpha,\beta$.

\section*{Data and code availability}
No empirical data were used.  The supplementary Python program verifies the
general congruence theorem on $C_3,C_4$, and $C_2\times C_2$, including all
typed pairs in the six-element ternary case.  It also verifies the five-ary
operation, enumerates all $15$ equivalences on $X$, certifies the eight
ordinary congruences and all $36$ typed congruence pairs, enumerates binary
reductions of every proper quotient, and classifies all depth-one atomic
traces.  Saved outputs are included with the source.

\section*{Funding}
The authors reported that no specific funding was received for this work.

\section*{Author contributions}
Both authors contributed to the conception, formal development, verification,
and writing of the manuscript, approved the submitted version, and accept
responsibility for its contents.

\section*{Conflict of interest}
The authors declare that they have no conflict of interest.

\end{document}